\def \kbar {\overline{k}}
\def \Aa {{\mathcal A}}
\def \Cc {{\mathcal C}}
\def \Ff {{\mathcal F}}
\def \Gg {{\mathcal G}}
\def \Hh {{\mathcal H}}
\def \Jj {{\mathcal J}}
\def \Kk {{\mathcal K}}
\def \Ll {{\mathcal L}}
\def \Pp {{\mathcal P}}
\def \Ss {{\mathcal S}}
\def \Ss {{\mathcal S}}
\def \Ww {{\mathcal W}}
\def \SSS {{\mathfrak S}}
\def \P {\mathbb P}
\def \Z {\mathbb Z}
\def \XXX {\mathfrak X}
\def \UUU {\mathfrak U}
\def \BBB {\mathfrak B}
\def \CCC {\mathfrak C}
\def \DDD {\mathfrak D}
\def \a {\mathfrak a}
\def \f {\mathfrak f}
\def \h {\mathfrak h}
\def \p {\mathfrak p}
\def \r {\mathfrak r}
\def \u {\mathfrak u}
\def \int {{\rm int}}
\def \Inv {{\rm Inv}}
\def \GL {{\rm GL}}
\def \Pic {{\rm Pic}\,}
\def \Sel {{\rm Sel}}
\def \kbar {\bar{k}}
\def\harr#1#2{\smash{\mathop{\hbox to .5in{\rightarrowfill}}
\limits^{\scriptstyle#1}_{\scriptstyle#2}}}
\def\varr#1#2{\llap{$\scriptstyle #1$}\left\downarrow \vcenter to
.5in{}\right.\rlap{$\scriptstyle #2$}}
\newtheorem{theorem}{Theorem}[section]
\newtheorem{corollary}[theorem]{Corollary}
\newtheorem{lemma}[theorem]{Lemma}
\newtheorem{proposition}[theorem]{Proposition}
\theoremstyle{definition}
\newtheorem{definition}[theorem]{Definition}
\theoremstyle{remark}
\newtheorem{remark}[theorem]{Remark}
\newtheorem{notation}[theorem]{Notation}
\numberwithin{equation}{section}
\begin{document}


\title[Dual Kummer Surface]{Explicit Computations on the Desingularized Kummer Surface}

\author{V.G. Lopez Neumann}

\address{Faculdade de Matem\'atica,
          Universidade Federal de Uberl\^andia,
          MG - Brazil}
\email{gonzalo@famat.ufu.br}

\author{Constantin Manoil}
\address{Section de Math\'ematiques,
          Universit\'e de Gen\`eve,
          CP $64$, $1211$ Geneva $4$, Switzerland}
\email{constantin.manoil@math.unige.ch}

\begin{abstract}
We find formulas for the birational maps from a Kummer surface
$\Kk$ and its dual $\Kk^*$ to their common minimal
desingularization $\Ss$. We show how the nodes of $\Kk$ blow up.
Then we give a description of the group of  linear automorphisms
of $\Ss$.
\end{abstract}


\subjclass{Primary 14J28, 14M15; Secondary 14J50}


\keywords{Genus $2$ curves; Kummer surfaces; line complexes.}

\date{}

\maketitle

\hsize=13.5truecm

\maketitle


\section{Introduction} The Kummer surface is a mathematical object
having a long history and which has been considered from various
points of view. We present the two approaches of this topic that
are relevant for this paper.

In the 19-th century a singular surface $\Kk$, called the Kummer
surface, was attached to a quadratic line complex. A minimal
desingularization $\Sigma$ of $\Kk$  and a birational map
$\kappa_1:\Kk\dashrightarrow \Sigma$ were  constructed by
geometric methods. One may call this the "old", or {\sl classical}
construction of the Kummer, which we recall in Section $4$.

Another construction is the following: let $A$ be an abelian
surface, and let the involution $\sigma: A\longrightarrow A$ be
given by $\sigma(x)=-x$. The quotient $\Kk=A/\sigma$ has $16$
double points and one defines a K3 surface $\Ss$ to be $\Kk$ with
these $16$ nodes blown up (see \cite{Be}, Prop. 8.11). The
presently prevailing terminology in literature designates $\Ss$ as
the Kummer surface attached to $A$. However  in this paper, to be
consistent with the historical point of view and with our main
reference \cite{CF}, we call $\Ss$ the {\sl desingularized} Kummer
surface. If $\Kk=\Jj(\Cc)/\sigma$, where $\Jj(\Cc)$ is the
Jacobian of a curve $\Cc$ of genus $2$, then $\Kk$ is called the
Kummer surface {\sl belonging to} $\Cc$. The natural question of
the connection between the two constructions of the Kummer surface
was explicitly answered in \cite{CF}, Chapter $17$: given the
equations of $\Cc$, one can write down the equations of the
quadratic line complex which will yield, by classical
construction, the Kummer surface belonging to $\Cc$ (see Lemma
\ref{lameme}).

The Jacobian $\Jj(\Cc)$ can be embedded in $\P^{15}$ and is
described by $72$ quadratic equations (\cite{Fl}). This makes
explicit calculations with the Jacobian difficult, for instance
giving equations of twists. Methods have been looked for, to study
the Mordell-Weil group of $\Jj(\Cc)$ using more computable
objects.

A desingularization $\Ss$ of $\Kk$ is constructed explicitly in
\cite{CF}, Chapter $16$ by algebraic methods. The surface $\Ss$
appeared naturally in recent attempts to compute the Mordell-Weil
group, by using important tools as Cassels' morphism and the fake
Selmer group. The K3 surface $\Ss$ is a smooth intersection of
three quadrics in $\P^5$. Denote by $\Kk^*$ the projective dual of
$\Kk$.  There are birational maps
$$
\kappa : \Kk \dashrightarrow \Ss \quad \text{and}\quad \kappa^* :
\Kk^* \dashrightarrow \Ss.
$$
There are morphisms extending $\kappa^{-1} : \Ss\dashrightarrow
\Kk$ and ${\kappa^*}^{-1} : \Ss\dashrightarrow \Kk^*$ to all of
$\Ss$, which we also denote by $\kappa^{-1}$ and ${\kappa^*}^{-1}$
and these are  minimal desingularizations of $\Kk$ and $\Kk^*$.

{\bf Origins}. \;Cassels and Flynn explain that  the surface $\Ss$
comes from the behaviour of six of the tropes (see Definition
\ref{trop}) under the duplication map. The existence of $\Ss$
raises more far-reaching questions. Indeed, if the ground field
$k$ is algebraically closed, one always has a commutative diagram:
\begin{equation}\label{dual}
\begin{array}{ccc}
\Jj(\Cc)                 & \harr{d_0}{} &  \Jj(\Cc)^0  \\
\varr{\pi}{} &              & \varr{}{\pi_0} \\
\Kk\,                  & \harr{d_0^*}{} & \Kk^*
\end{array}
\end{equation}
where $\Jj(\Cc)^0$ is the dual of $\Jj(\Cc)$ as an abelian
variety. Here, the maps $d_0$ and $d_0^*$ depend on the choice of
a rational point on $\Cc$. In other words, the abelian varieties
duality matches with the projective one (see \cite{CF}, Note $1$
on page $35$). When $k$ is not algebraically closed, one has to
enlarge the ground field to obtain such diagrams, yet $\Ss$ is a
desingularization {\sl over $k$} of both $\Kk$ and $\Kk^*$. One
can ask if there is  a unifying object for $\Jj(\Cc)$ and
$\Jj(\Cc)^0$, generalizing the abelian varieties duality.

{\bf Recent developments}. \;Perhaps the most natural definition
of $\Ss$ and its twists is related to an idea of M. Stoll and N.
Bruin, in connection with the computation of the Mordell-Weil
group of $\Jj(\Cc)$. This was presented a few years after
\cite{CF} appeared. We give a brief account of it in Section $5$.

Cassels and Flynn already suggested that the $2$-Selmer group
could be investigated by using twists of $\Ss$. In $2007$ A. Logan
and R. van Luijk (\cite{LL}) and P. Corn (\cite{C}) made use of
twists of $\Ss$ to find specific curves with nontrivial
$2$-torsion elements in the Tate-Shafarevich groups of their
Jacobians.

{\bf Our results} and structure of this paper.  In Section $2$ we
give a   background.

In Sections $3$ and $4$ we achieve part of the program suggested
in \cite{CF} at the end of Chapter $16$. In Section $3$ we
complete the construction in \cite {CF}, enlarge the set of points
where $\kappa:\Kk\dashrightarrow \Ss$ is explicitly defined and
find formulae for $\kappa$, listed in the Appendix. These formulae
allow one to describe how each singularity of $\Kk$ blows up,
beyond the simple use of the Kummer structure. Our treatment is
algebraic, in the vein of the definition of $\Ss$.

In Section $4$ we turn to geometric language. We first recall the
classical construction of $\Kk$. In Proposition \ref{conect} we
give an explicit isomorphism $\Theta$ between $\Sigma$ and $\Ss$.
Then we observe that well-known projective isomorphisms $W_i$
between $\Kk$ and $\Kk^*$ lift to $\Sigma$ to correspondences
given by line complexes of degree $1$. Using $\Theta$ we show that
the $W_i$ lift to $\Ss$ to known commuting involutions (Corollary
\ref{epsilon2}). With the formulae for $\kappa$ at hand one can
now find formulae for $\kappa^*$.

In Section $5$ we give a description of the group of  linear
automorphisms of $\Ss$.

This paper is almost self-contained. Moreover, all essential
information needed is contained in \cite{CF}.

\section{Preliminaries}

We work over a field $k$ of characteristic different from $2$. By
a curve, we mean a smooth projective irreducible variety of
dimension~$1$. {\sl Throughout this paper, $\Cc$ will be a curve
of genus~$2$}. If $k$ has at least $6$ elements and char($k)\neq
2$, such a curve admits an affine model:

\begin{equation}\label{courbe}
\Cc' : \quad Y^2 = F(X),
\end{equation}

where
$$
F(X)=f_0+f_1X+\ldots +f_6X^6 \in k[X], \qquad f_6\neq0
$$
and $F$ has distinct roots~$\theta_1, \ldots , \theta_6$. The
points $\a_i=(\theta_i , 0)$ are the Weierstrass points. We denote
by $\infty^\pm$ the points at infinity on the completion $\Cc$ of
$\Cc'$. For a given point~$\r=(x,y)$ on~$\Cc$, the {\sl conjugate
of~$\r$ under the~$\pm Y$ involution} is the point ${\bar \r} =
(x,-y)$. In accordance, for a divisor~$\XXX = \sum n_i \r_i$ we
denote by~${\overline \XXX} = \sum n_i {\bar \r_i}$. The class of
a divisor $\XXX$ is denoted by $[\XXX]$ and a divisor in the
canonical class  by $K_{\Cc}$.

There is a bijection between\, $\Pic^0(\Cc)$ and \,$\Pic^2(\Cc)$
defined by \,$[\DDD]\mapsto [\DDD+K_{\Cc}]$. Hence one can regard
a point of the Jacobian $J(\Cc)$ as the class of a divisor~$\XXX
=\r + \u$, where~$\r = ( x , y )$,~$\u = (u,v)$ is a pair of
points on~$\Cc$.


Effective divisors of degree~$2$ can be identified with points on
the symmetric product~$\Cc^{(2)}$ of~$\Cc$ with itself. This is a
non-singular variety, since $\Cc$ is a non-singular curve. Now,
the canonical class is represented by an infinity of divisors
\,$\r+{\bar \r}$, while any other class in $\Pic^2(\Cc)$ has a
unique representative. Hence the Jacobian should look like
$\Cc^{(2)}$ with the representatives of~[$K_{\Cc}]$ "blown down".
Flynn finds a projective emdedding of the Jacobian (see~\cite{Fl})
in the following way.

For a point~$\XXX=\{ \r , \u \}$ on~$\Cc^{(2)}$ with~$\r=(x,y)$,
$\u=(u,v)$, define:
$$
\sigma_0 =1, \quad \sigma_1 = x+u, \quad \sigma_2 =xu,
$$

$$
\beta_0= \frac{F_0(x,u) - 2yv}{(x-u)^2}, \quad
$$
where
$$
\begin{array}{ll}
F_0 (x,u) = & 2f_0+f_1(x+u)+2f_2xu+f_3xu(x+u) \cr
            & +2f_4(xu)^2+f_5(xu)^2(x+u)+2f_6(xu)^3 .
\end{array}
$$

The Jacobian is then the projective locus of ${\bf z} =(z_0:
\ldots :z_{15})$ in $\P^{15}$, where~$z_0=\delta$; $z_1=\gamma_1$;
$z_2=\gamma_0$; $z_i = \beta_{5-i}$,~$i=3,4,5$;
$z_i=\alpha_{9-i}$, $i=6,\ldots ,9$; $z_i =\sigma_{14-i}$,
$i=10,\ldots ,14$; and $z_{15}=\rho$. For the definition of the
functions~$\alpha$, $\beta$, etc. and details, see~\cite{Fl}.
\vskip 0.2truecm\noindent
Using the bijection between\, $\Pic^0(\Cc)$ and \,$\Pic^2(\Cc)$
one can describe generically the group law $\oplus$ on $\Jj(\Cc)$,
with neutral element $[K_{\Cc}]$. Let \,$\UUU$, $\BBB$ be divisors
such that \,$\UUU+\BBB\ne \r+{\bar \r}+\CCC$, with
\,$\r\in\Cc(\kbar)$ and $\CCC$ divisor of degree $2$. Then there
is a unique  $M(X)\in \kbar[X]$ of degree $3$ such that the cubic
\,$Y=M(X)$ passes through the four points of $\UUU$, $\BBB$. The
complete intersection of the cubic curve with $\Cc$ is given by
$$
M(X)^2=F(X), \qquad Y=M(X).
$$
The residual intersection is an effective divisor $\DDD$. Then
\,$[\UUU]\oplus [\BBB]=[\overline{\DDD}]$, i.e. \,$[\UUU]\oplus
[\BBB]=[(x_5,-M(x_5))+(x_6,-M(x_6))]$, where $x_5, x_6$ are the
last two roots of $M(X)^2-F(X)$.

\begin{definition}
The Kummer surface~$\Kk$ belonging to a curve of genus~$2$, is the
projective locus in~$\P^3$ of the elements\;
$\xi=(\xi_1:\xi_2:\xi_3:\xi_4)$,\; where
\begin{equation}\label{Kummer}
\xi_1= \sigma_0 , \quad \xi_2= \sigma_1 , \quad \xi_3= \sigma_2 ,
\quad \xi_4= \beta_0 .
\end{equation}
\end{definition}
The equation of the Kummer surface is given in~\cite{CF},
formula~($3.1.9$). It is of the form
\begin{equation}\label{KummerEq}
\Kk :\quad K = K_2 \xi^2_4 + K_1 \xi_4 + K_0 = 0
\end{equation}
where the~$K_i$ are forms of degree $4 - i$ in $\xi_1, \xi_2,
\xi_3$. The natural map from~$J(\Cc)$ to~$\Kk$ given by
$$
(z_0:\ldots : z_{15}) \longmapsto
(z_{14}:z_{13}:z_{12}:z_5)=(\xi_1:\ldots : \xi_4)
$$
is $2$ to $1$; the ramification points correspond to divisor
classes~$[\XXX]$ with~$[ \XXX]=[ {\overline \XXX}]$. In the sense
of the group law of the Jacobian, this means
that~$2[\XXX]=[K_{\Cc}]$. The images of these classes are the $16$
singular points (nodes) on~$\Kk$: $N_0 = (0:0:0:1)$ corresponding
to $[K_{\Cc}]$ and other~$\left(
\begin{array}{c} 6 \cr 2 \end{array} \right) = 15$ nodes~$N_{ij}$
corresponding to classes of divisors~$\XXX_{ij}=\a_i + \a_j$ with
$i \neq j$.
\begin{definition}\label{even}
A function $f\in\kbar(\Jj(\Cc))$ is called {\sl even} if $f([{\bar
\r}])=f([\r])$ and {\sl odd} if $f([{\bar \r}])=-f([\r])$.
\end{definition}
\begin{definition}\label{Kdual}
The surface $\Kk^*\subset(\P^3)^\checkmark=\P^3$ is the projective
dual of $\Kk$, i.e. to a point $\xi\in\Kk$ corresponds the point
$\eta\in\Kk^*$ such that
$\eta=(\eta_1:\eta_2:\eta_3:\eta_4)\in(\P^3)^\checkmark$ gives the
tangent plane to $\Kk$ at $\xi$.
\end{definition}
\begin{definition}\label{trop}
There are $6$ planes $T_i$ containing the $6$ nodes $N_0$ and
$N_{ij}$, $j \neq i$ and $10$ planes $T_{ijk}$ containing the $6$
nodes $N_{mn}$ for $\{ m,n \} \subset \{i,j,k \}$ or $\{ m,n \}
\cap \{i,j,k \} = \emptyset$. These are the {\it tropes}; they cut
conics on $\Kk$. They correspond to the $16$ singular points of
$\Kk^*$.
\end{definition}

When using the term trope, it will be clear from the context if we
refer to planes, conics or singular points of $\Kk^*$. The
equations of the $T_i$ are given in (\ref{tro}).
\section{The Desingularized Kummer}\label{sectiondesingularized}
We recall the facts from \cite{CF} Chapter $16$ we need, keeping
the notation there. We start with a  simple, yet a bit technical,
explanation of the ideas leading to the construction of the
desingularized Kummer $\Ss$. For a more conceptual one, see
\cite{CF}, Chapter $6$, Section $6$.

Recall that the Kummer parametrizes divisor of degree $2$, modulo
linear  equivalence and $\pm Y$ involution. Let $[\XXX] = [(x,y) +
(u,v)]\neq[K_{\Cc}]$, where $yv \neq 0$ and $x \neq u$ be a
divisor class. There is a unique $M(X)$ of degree $3$ such that
\begin{equation}\label{twiceM1}
M(X)^2-F(X)=(X-x)^2(X-u)^2 H(X),
\end{equation}
for a quadratic $H(X)$. The divisor given by \,$H(X)=0$,
\,$Y=-M(X)$ is in the class $2[\XXX]$. There is a unique
polynomial $P^*(X)$ of degree at most $5$ such that
\begin{equation}\label{polasterisque1}
(X-x)(X-u)P^*(X) \equiv M(X) \text{ mod } F(X).
\end{equation}
Then
$$
(X-x)^2(X-u)^2 {P^*}^2(X) \equiv M^2(X) \equiv (X-x)^2(X-u)^2 H(X)
\text{ mod } F(X)
$$
and since~$F(x)F(u)=(yv)^2 \neq 0$, we have also
$$
{P^*}^2(X) \equiv H(X) \text{ mod } F(X) .
$$
Changing $\XXX$ to $\overline{\XXX}$, changes $M(X)$ to $-M(X)$,
so  $P^*(X)$ to $-P^*(X)$.

Conversely, given $P^*$ with $\deg(P^*)\leq 5$ and
$(P^*)^2\equiv\text{quadratic \,mod } F$, the equation
\begin{equation}
(X-x)(X-u)P^*(X) \equiv \text{cubic} = M(X) \text{ mod } F(X)
\end{equation}
puts $2$ conditions on $x$, $u$, so has in general a unique set of
solutions. The divisor classes of $\DDD=(x, M(x))+(u, M(u))$ and
$\overline{\DDD}$ give the same point on the  Kummer and
correspond to $P^*$ and $-P^*$. All this suggests the following
construction.

Let~${\bf p}=(p_0:\ldots : p_5)$, where the~$p_j$ are
indeterminates, and put~$P(X)=\sum_0^5 p_j X^j$. Let~$\Ss$ the
projective locus of the~${\bf p}$ for which~$P(X)^2$ is congruent
to a quadratic in~$X$ modulo~$F(X)$. Put
\begin{equation}\label{Pj}
P_j(X)= \prod_{i \neq j} (X- \theta_i)
\end{equation}
and $\omega_j=P_j(\theta_j) \neq 0$. Since $\theta_i\ne\theta_j$
for $i\ne j$, we have $\omega_j\ne 0$ and the~$P_j$ span the
vector space of polynomials of degree at most~$5$. We have
\begin{equation}\label{linearcombination}
P(X)= \sum_j \pi_j P_j (X) , \qquad\text{where} \qquad
\pi_j=\frac{P(\theta_j)}{\omega_j} .
\end{equation}

The~$K3$ surface~$\Ss$ given as the complete intersection in
$\P^5$ of the three quadrics $\Ss_0$, $\Ss_1$, $\Ss_2$ where
\begin{equation}\label{p20}
\Ss_i : S_i=0, \qquad\text{and}\qquad S_i=\sum_j \theta_j^i
\omega_j \pi_j^2\quad \text{for} \quad i=0,1,2
\end{equation}
is a minimal desingularization of $\Kk$ and also of $\Kk^*$. Here
the $S_i$ are quadratic forms in ${\bf p}$ with coefficients in
$\Z[f_1,\ldots ,f_6]$.


The following theorems hold (\cite{CF}, Theorems 16.5.1 and
16.5.3):

\begin{theorem}\label{theoA}
There is a birational map $\kappa : \Kk \dashrightarrow \Ss$
defined for general $\xi \in \Kk$ as follows:

Let $\XXX = \{ (x,y) , (u,v) \}$ correspond to~$\xi$. Put
$G(X)=(X-x)(X-u)$ and let $M(X)$ be the cubic determined by the
property that $Y-M(X)$ vanishes twice on $\XXX$. Let
$P(X)=\sum_0^5 p_j X^j$ be determined by $GP \equiv M \text{ mod }
F$. Then $\kappa(\xi)$ is the point with projective coordinates
$(p_0:\ldots : p_5)$.
\end{theorem}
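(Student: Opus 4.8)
The plan is to show three things: (i) the recipe $\xi \mapsto (p_0:\cdots:p_5)$ is well-defined as a rational map into $\P^5$ and lands in $\Ss$; (ii) its image lies on the intersection of the three quadrics $\Ss_0,\Ss_1,\Ss_2$; and (iii) the map is birational, i.e. generically one-to-one with a rational inverse. The preliminary discussion already carries most of the conceptual content, so the task is to organize it into a clean argument and pin down the birationality.

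First I would make the construction honest. Given a general $\xi\in\Kk$, one recovers an effective divisor $\XXX=\{(x,y),(u,v)\}$ with $yv\neq 0$ and $x\neq u$ (these genericity conditions cut out a dense open subset of $\Kk$). The cubic $M(X)$ is uniquely determined by the condition that $Y-M(X)$ vanishes twice on $\XXX$; concretely $M$ interpolates the tangent data at $(x,y)$ and $(u,v)$, which is why (\ref{twiceM1}) holds with $G(X)^2=(X-x)^2(X-u)^2$ dividing $M^2-F$. Setting $G(X)=(X-x)(X-u)$, the congruence $GP\equiv M \pmod F$ determines a polynomial $P$ of degree $\le 5$ uniquely, since $F(x)F(u)=(yv)^2\neq 0$ guarantees $G$ is invertible modulo $F$. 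Squaring this congruence and using $M^2\equiv G^2 H \pmod F$ yields $P^2\equiv H \pmod F$ with $H$ quadratic, which is exactly the defining condition for a point of $\Ss$. So $\kappa(\xi)=(p_0:\cdots:p_5)$ lies on $\Ss$. To land in the explicit quadric equations (\ref{p20}), I would translate "$P^2\equiv$ quadratic $\bmod\,F$" into the vanishing of the three higher coefficients via the interpolation basis (\ref{linearcombination}): writing $P=\sum_j\pi_jP_j$ with $\pi_j=P(\theta_j)/\omega_j$, one computes $P(\theta_j)^2=\pi_j^2\omega_j^2$, and the statement that $P^2 \bmod F$ has degree $\le 2$ is equivalent to $\sum_j \theta_j^i\,\omega_j\,\pi_j^2=0$ for $i=0,1,2$, i.e. $S_i=0$. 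This identifies the recipe with the three quadrics defining $\Ss$.

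For birationality I would construct the inverse explicitly, following the "conversely" paragraph of the excerpt. Given a general ${\bf p}\in\Ss$, form $P(X)=\sum p_jX^j$; by definition $P^2\equiv H \pmod F$ for some quadratic $H$. The equation $G\,P\equiv \text{cubic} \pmod F$, read as two linear conditions killing the top two coefficients of $GP \bmod F$, determines the unknown monic quadratic $G(X)=(X-x)(X-u)$, hence the unordered pair $\{x,u\}$, generically uniquely. One then recovers $M=GP \bmod F$ and the divisor $\DDD=(x,M(x))+(u,M(u))$, whose Kummer image $\xi$ is well-defined and independent of the sign ambiguity ($P$ and $-P$ give $\DDD$ and $\overline{\DDD}$, hence the same point of $\Kk$). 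Checking that this assignment is the rational inverse of $\kappa$ on a dense open set, and that both maps are rational in the coordinates, completes the proof.

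The main obstacle I expect is step (iii): verifying that the two linear conditions "$GP\equiv$ cubic $\bmod F$" genuinely have a unique solution $G$ for generic ${\bf p}\in\Ss$, and that the resulting $\{x,u\}$ varies rationally. This is the place where one must show the generic fibre is a single point rather than a positive-dimensional family, and where the genericity hypotheses ($yv\neq 0$, $x\neq u$, $\xi$ away from the nodes and tropes) are actually used; the degree count and the nondegeneracy of the relevant linear system in the $p_j$ are the technical heart. The forward direction and the landing in $\Ss$ are essentially bookkeeping with the congruences already set up before the theorem.
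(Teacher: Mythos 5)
Your proposal is correct and follows essentially the same route as the paper, which itself only sketches the argument in the discussion preceding the statement (the congruence $P^2\equiv H \bmod F$ via $F(x)F(u)=(yv)^2\neq 0$, the translation into the quadrics $S_i=0$ through the basis $P_j$, and the ``conversely'' paragraph giving the inverse by imposing two conditions on the monic quadratic $G$) and then cites \cite{CF}, Theorem 16.5.1, for the full proof. The technical point you flag --- that the two linear conditions determine $G$ uniquely for generic ${\bf p}$ --- is exactly the step the paper also leaves to \cite{CF}, so your reconstruction matches the paper's treatment in both structure and level of detail.
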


Let~$\kappa^* : \Kk^* \dashrightarrow \Ss$ be the birational map
defined in \cite{CF}, Theorem $16.5.2$.

\begin{theorem}\label{theoB}
Let~$\xi \in \Kk$ and~$\eta \in \Kk^*$ be dual, that is~$\eta$
gives the tangent to~$\Kk$ at~$\xi$.
Then~$\kappa(\xi)=\kappa^*(\eta)$.
\end{theorem}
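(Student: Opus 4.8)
The plan is to reduce Theorem \ref{theoB} to a polynomial identity and verify it by explicit computation on the common desingularization $\Ss$. Since $\kappa$ and $\kappa^*$ are birational and their inverses extend to morphisms $\kappa^{-1}:\Ss\to\Kk$ and $(\kappa^*)^{-1}:\Ss\to\Kk^*$, the assertion $\kappa(\xi)=\kappa^*(\eta)$ for dual $\xi,\eta$ is equivalent, on a dense open set, to the following: for a generic ${\bf p}\in\Ss$, writing $\xi=\kappa^{-1}({\bf p})$ and $\eta=(\kappa^*)^{-1}({\bf p})$, the point $\eta$ is exactly the tangent plane to $\Kk$ at $\xi$. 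This reformulation is convenient because $\kappa^{-1}$ and $(\kappa^*)^{-1}$ are given by explicit forms in ${\bf p}$, whereas $\kappa$ and $\kappa^*$ are described only implicitly through a divisor class.

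First I would assemble the explicit data: the forms expressing $\xi_1,\ldots,\xi_4=\kappa^{-1}({\bf p})$ and $\eta_1,\ldots,\eta_4=(\kappa^*)^{-1}({\bf p})$ as homogeneous polynomials in the $p_j$ (equivalently in the $\pi_j$ of \eqref{linearcombination}), together with the gradient of the Kummer equation \eqref{KummerEq}. By Definition \ref{Kdual} the tangent plane to $\Kk$ at $\xi$ has dual coordinates
$$
\Big(\frac{\partial K}{\partial\xi_1}:\frac{\partial K}{\partial\xi_2}:\frac{\partial K}{\partial\xi_3}:\frac{\partial K}{\partial\xi_4}\Big)\Big|_{\xi},
$$
where $\partial K/\partial\xi_4=2K_2\xi_4+K_1$ and the remaining three derivatives are obtained from the degree-$(4-i)$ forms $K_i$. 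Substituting $\xi=\kappa^{-1}({\bf p})$ into these four partial derivatives produces four forms in ${\bf p}$, and the theorem amounts to showing that this quadruple is proportional to $\big(\eta_1({\bf p}):\ldots:\eta_4({\bf p})\big)$.

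The main obstacle is precisely this last proportionality, which is a genuine polynomial identity holding only modulo the ideal $(S_0,S_1,S_2)$ cutting out $\Ss$ in $\P^5$ (see \eqref{p20}). Because the $K_i$ have degree up to $4$ in $\xi$ and each $\xi_i$ is quadratic in ${\bf p}$, the composite $\nabla K\circ\kappa^{-1}$ has high degree in ${\bf p}$, so the verification requires a systematic reduction of the four forms modulo $S_0,S_1,S_2$ and a comparison with the $\eta_i$. I expect the efficient route is to carry out this reduction in the symmetric coordinates $\pi_j$, exploiting the symmetry of \eqref{p20} under permutation of the roots $\theta_j$, rather than by brute force; a computer algebra check is natural here. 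As a conceptual cross-check one may instead argue over $\kbar$ through diagram \eqref{dual}: projective duality $d_0^*:\Kk\to\Kk^*$ is induced by the duplication map $d_0$ on the Jacobian, while by the discussion preceding Theorem \ref{theoA} the polynomial $P$ attached to $\xi$ satisfies $P^2\equiv H\ (\mathrm{mod}\ F)$ with $H$ defining the class $2[\XXX]$; identifying the polynomial that $\kappa^*$ attaches to the dual point $\eta$ with this same $P$ then yields $\kappa(\xi)=\kappa^*(\eta)$ by descent along $\pi$. This conceptual argument isolates where the content lies --- matching the two constructions of $P$ --- but reduces, in the end, to the same comparison.
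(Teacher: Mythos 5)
The paper does not prove Theorem~\ref{theoB} at all: it is imported verbatim from \cite{CF} (Theorem~16.5.3), so there is no internal proof to compare yours against. Judged on its own terms, your proposal is a plan for a proof rather than a proof. The reduction to ``for generic ${\bf p}\in\Ss$, the point $(\kappa^*)^{-1}({\bf p})$ is the tangent plane to $\Kk$ at $\kappa^{-1}({\bf p})$'' is legitimate, and checking the resulting proportionality of $\nabla K\bigl(\kappa^{-1}({\bf p})\bigr)$ with $(\kappa^*)^{-1}({\bf p})$ modulo the ideal $(S_0,S_1,S_2)$ of (\ref{p20}) would indeed settle the theorem on a dense open set. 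But that identity is exactly where all the content lies, and you never exhibit or verify it --- you only say you expect a computer algebra check to succeed. There is also a practical obstruction you do not address: the computation needs explicit forms for $(\kappa^*)^{-1}$ (equivalently for $\kappa^*$) obtained \emph{independently} of the duality statement. Within this paper the only handle on $\kappa^*$ is $\kappa^*=\varepsilon^{(i)}\circ\kappa\circ W_i^{-1}$, which rests on Corollary~\ref{epsilon2}, whose proof invokes Theorem~\ref{theoB} in (\ref{19}); so assembling the data for your verification from the paper's own formulas would be circular. One must instead go back to the independent definition of $\kappa^*$ in \cite{CF}, Theorem~16.5.2.

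The ``conceptual cross-check'' at the end has the same defect in sharper form: the step ``identifying the polynomial that $\kappa^*$ attaches to the dual point $\eta$ with this same $P$'' \emph{is} the assertion $\kappa(\xi)=\kappa^*(\eta)$, so as written it assumes what is to be proved. The honest conceptual route, which is essentially the one underlying Section~4 of the paper and \cite{CF}, is to unwind the definition of $\kappa^*$ on the dual side (via the behaviour of the tangent plane section of $\Kk$, or geometrically via Lemma~\ref{duality}, where $\kappa_1(\xi)$ and $\kappa_1^*(\eta)$ are both the point $L_\xi\cap L'_\xi$ by construction) and show it produces the same residue class $P \bmod F$ up to sign. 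Either fill in that identification explicitly, or actually carry out and record the polynomial verification; as it stands the argument has a genuine gap at its central step.
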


Our first result is the following.
\begin{lemma}\label{mapkappa}
The map~$\kappa : \Kk\dashrightarrow\Ss$ from Theorem~\ref{theoA}
is given by the formulae listed in the Appendix.
\end{lemma}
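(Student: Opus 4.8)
The plan is to make the abstract map from Theorem~\ref{theoA} fully explicit by tracing through the two defining conditions on the polynomial $P(X)$ and converting them into polynomial identities in the Kummer coordinates $\xi_1,\xi_2,\xi_3,\xi_4$. Given a general $\xi$, the corresponding divisor $\XXX=\{(x,y),(u,v)\}$ has $G(X)=(X-x)(X-u)=X^2-\sigma_1 X+\sigma_2$, so the coefficients of $G$ are read off directly from $\xi_1,\xi_2,\xi_3$ via \eqref{Kummer}. First I would write down the cubic $M(X)$ explicitly. By definition $Y-M(X)$ vanishes twice on $\XXX$, which is exactly the condition \eqref{twiceM1} that $M(X)^2-F(X)$ is divisible by $G(X)^2$. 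Writing $M(X)=m_0+m_1X+m_2X^2+m_3X^3$, this divisibility gives four linear equations (the vanishing of $M-Y$ and its derivative at the two points, equivalently a Hermite interpolation at $x$ and $u$), whose solution expresses the $m_i$ as rational functions of $x,y,u,v$. The key observation is that these expressions, once symmetrized in the two points, must be writable in terms of the symmetric functions $\sigma_0,\sigma_1,\sigma_2$ together with $\beta_0$ (which encodes the odd part $yv$), hence in terms of $\xi_1,\dots,\xi_4$.

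Next I would solve the congruence $GP\equiv M \bmod F$ for $P(X)=\sum_0^5 p_jX^j$. Since $\deg F=6$ and $\gcd(G,F)=1$ on the general locus (no Weierstrass point on $\XXX$), multiplication by $G$ is invertible modulo $F$, so $P$ is uniquely determined modulo $F$; imposing $\deg P\le 5$ pins down $P$ exactly. Concretely this is a linear-algebra problem: reduce $GP$ modulo $F$ using $f_6\neq 0$ to lower the degree, match coefficients against $M$, and invert the resulting $6\times 6$ system. The cleanest route is probably to use the interpolation description \eqref{linearcombination}: since the $P_j$ span degree-$\le5$ polynomials and $\pi_j=P(\theta_j)/\omega_j$, it suffices to compute $P(\theta_j)$ for each root $\theta_j$. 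Evaluating the congruence at $X=\theta_j$ kills $F$ and gives $G(\theta_j)P(\theta_j)=M(\theta_j)$, so $P(\theta_j)=M(\theta_j)/G(\theta_j)$ whenever $G(\theta_j)\neq0$. This turns the whole computation into evaluating the already-known cubic $M$ at the six roots, and the resulting formulae for the $p_j$ (or equivalently the $\pi_j$) are obtained by clearing the symmetric-function denominators.

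The main obstacle is not any single step but the bookkeeping required to present the answer as genuine polynomials in $\xi_1,\dots,\xi_4$ with coefficients in the $f_i$, rather than as rational functions of the individual point coordinates $x,y,u,v$. The denominators $(x-u)^2$ appearing in $\beta_0$ and in the solution of \eqref{twiceM1}, together with the factors $G(\theta_j)$ and $\omega_j$, must all cancel or be absorbed into a common scaling of the projective coordinates $(p_0:\dots:p_5)$. I expect that after multiplying through by a suitable symmetric factor the entries become forms in $\xi$ of a fixed degree, and the verification amounts to checking that each candidate formula indeed satisfies $GP\equiv M\bmod F$ and lands on $\Ss$, i.e.\ satisfies the three quadrics \eqref{p20}. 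The latter check is a direct, if lengthy, substitution; because $\kappa$ is already known to be a well-defined birational map by Theorem~\ref{theoA}, it suffices to confirm agreement on the dense open locus where $yv\neq0$ and $x\neq u$, after which the listed formulae are validated by construction. I would close by recording the explicit expressions in the Appendix and noting that they extend $\kappa$ to the larger locus promised in Section~$3$.
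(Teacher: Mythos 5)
Your skeleton matches the paper's: construct $M(X)$ by Hermite interpolation at the two points, invert multiplication by $G$ modulo $F$ to obtain $P$, and rewrite everything in the Kummer coordinates. (The paper solves $GP\equiv M\bmod F$ by eliminating $F(x)=y^2$, $F(u)=v^2$ through the identity $F(X)-F(x)=(X-x)F(x,X)$, staying in $\Z[f_0,\dots,f_6]$ throughout; your route via $P(\theta_j)=M(\theta_j)/G(\theta_j)$ is workable but forces a return trip through the symmetric functions of the $\theta_j$.) The genuine gap is your central claim that the coefficients of $M$, ``once symmetrized in the two points, must be writable in terms of $\sigma_0,\sigma_1,\sigma_2$ together with $\beta_0$.'' They are not. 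Those coefficients are linear in $y$ and $v$ separately and change sign under $\XXX\mapsto\overline{\XXX}$ (which fixes $\xi$): they are \emph{odd} functions on $\Jj(\Cc)$, so neither they nor the coefficients of $P$ lie in $k(\Kk)$, and none of the correction factors you invoke ($(x-u)^2$, $G(\theta_j)$, $\omega_j$) can help, since all of these are even. The missing idea is to rescale the whole coordinate vector by a function that is itself odd and antisymmetric: the paper passes to $P^\triangle=2(x-u)^3yv\,P$ (adjusted by a multiple of $F$ to have degree $5$) and multiplies by $(y-v)/(x-u)^2$, after which every coefficient is an even symmetric function of $(x,y),(u,v)$ and only then can be expressed through $\xi_1,\dots,\xi_4$ using $y^2=F(x)$, $v^2=F(u)$ and $2yv=F_0(x,u)-\beta_0(x-u)^2$.

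A second omission: the expressions produced this way are still not the ones listed in the Appendix. The direct computation yields $\tilde p_j=\alpha_jK_2+\beta_j(K_1\xi_4+K_0)$, of degree $6$ in $\xi$; the paper then subtracts $\beta_jK$ and divides by $K_2$, using the Kummer equation $K=K_2\xi_4^2+K_1\xi_4+K_0=0$, to get the degree-$4$ forms $p_j=\alpha_j-\beta_j\xi_4^2$ that are actually tabulated. This step is not cosmetic: it is what makes the formulae well defined on the locus $K_2=0$, i.e.\ at images of classes $[(x,y)+(u,v)]$ with $x=u$, $y=v\neq0$, which is part of the enlargement of the domain of $\kappa$ announced in Section~3. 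Your appeal to agreement on a dense open locus certifies whatever formulae you end up with, but does not by itself produce these particular ones nor the extension beyond the generic locus.
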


\begin{proof}
The problem is to make effective the method given in \cite{CF},
Chapter $16$. For completeness and because of typing errors there,
we recall it in the Appendix. As presented in \cite{CF}, the
method works for a general element $[\XXX] = [(x,y) + (u,v)]$,
where $y v \neq 0$ and $x \neq u$.
After finding the polynomial $P$ from Theorem \ref{theoA}, one has
to modify it slightly, so that its coefficients be even functions
of~$k(J(\Cc))$ and therefore belong to the function field of the
Kummer.

We get formulae for~$\kappa$ by expressing~$y^2=F(x)$, $v^2=F(u)$,
$$
yv=\frac{F_0(x,u)-\beta_0 (x-u)^2}{2}
$$
in the coefficients of~$P(X)$, then as the resulting coefficients
are symmetric fonctions of $x$ and~$u$, we express  them in terms
of $\xi_2=x+u$ and~$\xi_3=xu$. Finally we homogenize the formulae
with respect to~$\xi_1=1$, $\xi_2$, $\xi_3$, $\xi_4=\beta_0$.


One first obtains
$$
\kappa(\xi)=
 \left(
        \tilde{p}_0(\xi): \ldots : \tilde{p}_5(\xi)
 \right) ,
$$
where
\begin{equation}\label{polcombination}
\tilde{p}_j(\xi)= \alpha_j  K_2 + \beta_j  (K_1 \xi_4 + K_0),
\quad \text{for } 0 \le j \le 5.
\end{equation}
Here $\alpha_j$ and $\beta_j$ are homogeneous forms in $\xi$ of
degree $4$ and $2$ respectively and the $K_j$ are those in
(\ref{KummerEq}).

Taking $p_j(\xi)=(\tilde{p}_j(\xi) - \beta_j K)/K_2 = \alpha_j -
\beta_j \xi_4^2$, we obtain formulae of degree $4$ for $\kappa$
which will be defined also for $K_2=0$, extending $\kappa$ to
images of divisor classes $[\XXX] = [(x,y) + (u,v)]$ with $x=u$
and $y=v \neq 0$. However the formulae do not work for  points
with $F'(x)=0$ and for the image of $[2 \, \infty^+]$. We will
treat the case $y=0$ or $v=0$ in connection with nodes and tropes.
\end{proof}

There are 6 commuting involutions $\varepsilon^{(i)}$ of $\Ss$,
which can be described as follows. Let the point $(p_0: \ldots :
p_5)$ be represented by the polynomial $P(X)=p_0+p_1X+\cdots
p_5X^5$ and
let
\begin{equation}\label{Gj}
 g_i(X) = 1 - 2 \frac{P_i(X)}{P_i(\theta_i)},
 \end{equation}
where $P_i(X)$ is defined by (\ref{Pj}). We see that
$g_i(\theta_j)=(-1)^{\delta_{ij}}$, so $g_i(X)^2 \equiv 1 \mod
F(X)$. Then one defines
\begin{equation}\label{formulavarepsilon}
\varepsilon^{(i)}(P(X)) = g_i(X) P(X) \mod F(X).
\end{equation}
In terms of coordinates $\pi_j$, one has
\begin{equation}\label{piji}
\varepsilon^{(i)}(\pi_j)=(-1)^{\delta_{ij}}\pi_j.
\end{equation}
\begin{definition}\label{Inv}
We define $\text{Inv} (\Ss)$ to be the group of  $32$ commuting
involutions of $S$ generated by the $\varepsilon^{(i)}$.
\end{definition}

The~${\bf p}=(p_0:p_1:0:0:0:0)$ are clearly in~$\Ss$ and form a
rational line~$\Delta_0$. We shall often write
$p_0+p_1X\in\Delta_0$. Acting on $\Delta_0$ by the involutions
gives~$31$ further lines.

\begin{notation}\label{Delta}
We denote:
$$
\Delta_i= \varepsilon^{(i)} (\Delta_0) \, , \quad
\Delta_{ij}=\varepsilon^{(i)} \circ \varepsilon^{(j)} (\Delta_0)
\quad \text{and} \quad \Delta_{ijk}=\varepsilon^{(i)} \circ
\varepsilon^{(j)} \circ \varepsilon^{(k)}(\Delta_0) \, .
$$
\end{notation}

We come now to the main result in this section, which describes
how the singularities of $\Kk$ and $\Kk^*$ blow up.

\begin{lemma}\label{structure}
The map $\kappa$ blows up the node~$N_0=(0:0:0:1)$ of~$\Kk$ into
the line~$\Delta_0$ and the $15$ nodes $N_{ij}$ into the lines
$\Delta_{ij}$. The tropes $T_i$ and $T_{ijk}$ blow up by
$\kappa^*$ into the lines $\Delta_i$ and $\Delta_{ijk}$.
\end{lemma}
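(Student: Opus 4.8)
The plan is to compute $\kappa$ explicitly on the nodes and $\kappa^*$ on the tropes, using the geometric description of $\Ss$ as polynomials $P(X)$ with $P(X)^2$ congruent to a quadratic modulo $F(X)$, together with the action of the involutions $\varepsilon^{(i)}$ in the $\pi_j$-coordinates.

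\medskip

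\noindent\textbf{Blowing up the nodes.} First I would treat $N_0=(0:0:0:1)$, which is the image of $[K_\Cc]$. The formulae for $\kappa$ from Lemma~\ref{mapkappa} are of degree $4$ and vanish along the locus where $\xi$ degenerates to a node; so to resolve the indeterminacy at $N_0$ I would approach $N_0$ along a general one-parameter family of divisor classes $[\XXX_t]=[(x_t,y_t)+(u_t,v_t)]$ tending to a representative $\r+\bar\r$ of the canonical class. For such a degenerating family the cubic $M(X)$ and hence $P(X)=\sum p_jX^j$ with $GP\equiv M\bmod F$ should, in the limit, land in the two-dimensional space spanned by $1$ and $X$, i.e. in $\Delta_0=\{(p_0:p_1:0:0:0:0)\}$. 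Concretely I would substitute the limiting data into the Appendix formulae for $p_j(\xi)=\alpha_j-\beta_j\xi_4^2$ and check that $p_2,\dots,p_5$ vanish while $(p_0:p_1)$ sweeps out all of $\Delta_0$ as $\r$ (equivalently, the tangent direction of the family) varies. For the remaining nodes $N_{ij}$, corresponding to $\XXX_{ij}=\a_i+\a_j$, I would use the involutions rather than recomputing: the node $N_{ij}$ is obtained from $N_0$ by the group law translation $2[\XXX]\mapsto 2[\XXX]+[\a_i+\a_j-K_\Cc]$, and one checks that on $\Ss$ this translation is realized by $\varepsilon^{(i)}\circ\varepsilon^{(j)}$. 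Since $\Delta_{ij}=\varepsilon^{(i)}\circ\varepsilon^{(j)}(\Delta_0)$ by Notation~\ref{Delta}, the image of $N_{ij}$ is $\Delta_{ij}$, provided the compatibility of $\kappa$ with the two translation actions is established.

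\medskip

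\noindent\textbf{Blowing up the tropes via $\kappa^*$.} By Theorem~\ref{theoB}, $\kappa$ and $\kappa^*$ agree under the projective duality $\xi\leftrightarrow\eta$, and the $16$ tropes of $\Kk$ correspond to the $16$ nodes of $\Kk^*$. I would identify which trope maps to which $\Delta$-line by tracking the parity/sign pattern of the $\pi_j$ under the $\varepsilon^{(i)}$. The key identity is (\ref{piji}): $\varepsilon^{(i)}(\pi_j)=(-1)^{\delta_{ij}}\pi_j$, so each $\Delta$-line is characterized by which coordinates $\pi_j$ are forced to vanish. A point of $\Delta_0$ is $p_0+p_1X$, a polynomial of degree $\le 1$; evaluating (\ref{linearcombination}) shows exactly which $\pi_j$ survive, and then $\Delta_i$ is obtained by flipping the sign in the $i$-th slot. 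On the $\Kk^*$ side, the trope $T_i$ is a plane cutting a conic through the six nodes $N_0,N_{ij}$; its dual point is a node of $\Kk^*$, and I would show that the limit of $\kappa^*$ along a family approaching this node lies in $\Delta_i$ by matching the induced parity conditions on $\pi_j$. The analogous count $T_{ijk}\mapsto\Delta_{ijk}$ follows from the same bookkeeping, since $\varepsilon^{(i)}\circ\varepsilon^{(j)}\circ\varepsilon^{(k)}$ flips three signs.

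\medskip

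\noindent\textbf{Main obstacle.} The hard part will be the limit computation at $N_0$ and at the trope nodes: the Appendix formulae are of degree $4$ and vanish identically at these points, so one must blow up and evaluate the leading terms of $p_2,\dots,p_5$ along a generic approaching family to see that they vanish to higher order than $p_0,p_1$, and simultaneously verify that $(p_0:p_1)$ genuinely covers all of $\Delta_0$ rather than a single point. Equivalently, one must show the exceptional divisor over each node maps isomorphically onto the corresponding $\Delta$-line, which requires control of the $\kappa$-formulae to one order beyond naive vanishing. Once this is done for $N_0$, the bulk of the work is transported to the other nodes and to all sixteen tropes purely by the group action of $\Inv(\Ss)$ together with (\ref{piji}) and Notation~\ref{Delta}, so the essential analytic input is concentrated in a single local computation.
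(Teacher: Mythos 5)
Your proposal follows essentially the same route as the paper: a local limit computation along a one-parameter family degenerating to the canonical class shows $N_0$ blows up onto $\Delta_0$ (the paper gets $\kappa(\xi)\approx(-x:1:0:0:0:0)$ with $x$ sweeping the line), and the remaining nodes and all sixteen tropes are then handled by transporting this via the involutions $\varepsilon^{(i)}$ and their compatibility with the maps $W_i$ (addition of a Weierstrass point), which is exactly the "compatibility of the two translation actions" you defer and which the paper supplies as Corollary~\ref{epsilon2}. The only substantive addition in the paper is an explicit computation of $\kappa$ on divisors $\{x,y\}+\{\theta_i,0\}$, yielding $P(X)=2(x-\theta_i)P_i(X)+P_i(\theta_i)(X-x)\equiv P_i(\theta_i)g_i(X)(X-x)$, which identifies the $\Delta_i$ concretely as strict transforms of the trope-conics through $N_0$.
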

{\sl Note.} This result is predicted in \cite{CF} as plausible.
\begin{proof}
The node~$N_0$ corresponds to the canonical class, therefore we
consider divisors of the type~$\XXX=(x,y)+(u,v)$ with~$u=x+h$, $h$
small and~$v \approx -y \neq 0$. Then the local behaviour of the
Kummer coordinates is $\xi_1=1$, $\xi_2=2x+h \approx 2x$,
$\xi_3=x(x+h) \approx x^2$ and
$$
\xi_4 =\frac{F_0(x,x+h) - 2yv}{h^2} \approx \frac{4y^2}{h^2}.
$$
Replacing this in the formulae for~$\kappa$ and clearing
denominators, then taking the limit as~$h \to 0$, we obtain
$$
\begin{array}{lcl}
\kappa (\xi) & \approx & (-16 x y^4 : 16 y^4 : 0 : 0 : 0 : 0 ) \cr
             & \approx & ( -x : 1 : 0 : 0 : 0 : 0 ) ,
\end{array}
$$
since~$y \neq 0$.

Note that for $(X-\theta_i) \in \Delta_0$, we have
$$
\varepsilon^{(i)}(X-\theta_i) \equiv g_i(X) (X-\theta_i) \equiv (X
- \theta_i) \mod F(X),
$$
so $\Delta_0$ and $\Delta_i$ intersect at $(-\theta_i: 1 :
0:0:0:0)$.

We now show that $\Delta_0 \cap \Delta_{ij} = \emptyset$ for $i
\neq j$. Indeed, the intersection point $\p$ should be invariant
by $\varepsilon^{(i)} \circ \varepsilon^{(j)}$. A polynomial
$P(X)$ represents such a point iff
$$
\begin{array}{c}
\alpha P(X) \equiv g_i(X) g_j(X)P(X) \mod F(X) \quad \text{for
some
} \, \alpha \in \kbar^*  \\
\text{iff} \\
F(X) \mid P(X)(\alpha - g_i(X)g_j(X)) .
\end{array}
$$
Replacing $X$ by the roots of $F(X)$ one sees that $P(X)$ must
have at least two roots among the $\theta_k$, so it must be of
degree at least $2$ and therefore cannot represent a point on
$\Delta_0$.
%
Similarly, $\Delta_0 \cap \Delta_{ijk} = \emptyset$ for $i \neq j
\neq k$.

The six $\Delta_i$ are strict transforms of the conics cut
on~$\Kk$ by the tropes containing~$N_0$.
To see this, recall that we still have to define~$\kappa$ for
points corresponding to divisors~$\XXX= \{x,y\} + \{ \theta_i , 0
\}$ with $y \neq 0$. Write $F(X)=f_6(X-\theta_i) P_i (X)$. From
this we get formulae for $f_k$, $k=0, \ldots ,6$ depending
on~$\theta_i$ and~$h_{ij}$, $j=0, \ldots 5$, the coefficients
of~$P_i(X)$, which we plug into
$$
\xi_4=\frac{F_0(x,\theta_i)}{(x-\theta_i)^2} .
$$
We substitute then~$\xi_1=1$, $\xi_2=x+\theta_i$,
$\xi_3=x\theta_i$ and~$\xi_4$ in the formulae for~$\kappa$. On
multiplying by $(x-\theta_i)^2/(f_6^2 P_i(x))$ (note that
$P_i(x)\neq 0$), we obtain
\begin{equation}\label{eqdeltai}
P(X)= 2(x-\theta_i)P_i(X) + P_i(\theta_i)(X-x) ,
\end{equation}
that is
\begin{equation}\label{eq2.7}
\begin{array}{lll}
 p_0 & = & 2h_{i0} (x-\theta_i) -P_i(\theta_i)x  \cr
 p_1 & = & 2h_{i1} (x-\theta_i) +P_i(\theta_i)  \cr
 p_j & = & 2h_{ij} (x-\theta_i) \qquad \qquad
\text{for } 2 \le j \le 5 .
\end{array}
\end{equation}
The points~$(1: x+\theta_i: x\theta_i: \xi_4)$  belong to the
conic $T_i$ cut on~$\Kk$ by the trope
\begin{equation}\label{tro}
\theta_i^2 \xi_1 - \theta_i \xi_2 + \xi_3 = 0,
\end{equation}
passing throught~$N_0$ and~$N_{ij}$, $j\neq i$. Formulae
$(\ref{eq2.7})$ give parametric equations (in $x$) of the strict
transform by $\kappa$ of this conic. To confirm that this is
$\Delta_i$, one verifies that
$$
P(X) \equiv P_i(\theta_i)g_i(X)(X-x) \mod F(X) .
$$
Recall from~\cite{CF}, Chapter~$4$, Section $5$ that there are
linear maps~$W_i : \Kk\longrightarrow \Kk^*$, taking the
node~$N_0$ to the trope~$T_i$ ($W_i$ is induced by addition of a
Weierstrass point $\a_i$). Further, $W_i^{-1} \circ W_j$ moves
$N_0$ to the node~$N_{ij}$. Applying the results in Section $4$
and especially Corollary \ref{epsilon2}, one concludes that:

1)\, the tropes $T_i$ considered as  singular points of $\Kk^*$,
blow up by $\kappa^*$ into $\Delta_i$;

2)\, each of the fifteen $N_{ij}$ blows up into $\Delta_{ij}$;

3)\, the tropes $T_{ijk},\, i\neq j\neq k$ blow up into
$\Delta_{ijk}$;

4)\, the ten $\Delta_{ijk}$ ($i\neq j \neq k$) are strict
transforms of the ten conics cut on $\Kk$ by the tropes (planes)
not containing~$N_0$. Each of them intersects six $\Delta_{ij}$
since each node is on six tropes.
\end{proof}
\section{Linear and quadratic line complexes}

We recall from \cite{Hu} and~\cite{CF} the definitions of the
Kummer surface and of the corresponding desingularization $\Sigma$
in terms of quadratic complexes. Then we link them to the surface
$S$ studied in Section $3$. Notations are like in \cite{CF},
Chapter~$17$.

If~$u=(u_1:u_2:u_3:u_4)$ and~$v=(v_1:v_2:v_3:v_4)$ are distinct
points in~$\P^3$, then the Grassman coordinates of the line~$<u,v>
\subset \P^3$ are
$$
\mathfrak{p}=(p_{43}:p_{24}:p_{41}:p_{21}:p_{31}:p_{32})=(X_1:\ldots
: X_6),
$$
with~$p_{ij}=u_iv_j-u_jv_i$. Denote by~$\Gg$ the Grassmanian
quadric in~$\P^5$, representing the lines in~$\P^3$. Its equation
is
$$
G(X_1,\ldots ,X_6)=2X_1X_4+2X_2X_5+2X_3X_6=0.
$$
\begin{definition}\label{linear}
A line complex of degree $d$ is a set of lines in~$\P^3$ whose
Grassman coordinates satisfy a homogeneous equation $Q(X_1,\ldots
, X_6)=0$ of degree $d$.
\end{definition}
If $d=1$ this is called a linear complex and if $d=2$ a quadratic
complex.

A line $L \in \Gg$ parametrizes a pencil of lines in $\P^3$. The
lines of a pencil $L$ all pass through a point $\f(L)=u$, called
the {\it focus} of the pencil) and lie in one plane $\h(L)=\pi_u$,
the {\it plane} of the pencil.

All lines in a linear complex~$\Ll$ passing through a given
point~$u$ (respectively lying in a plane $\pi$), form a pencil
$L_u$ (respectively $L_\pi$). Each linear complex $\Ll$
establishes a {\it correspondence} between points and planes in
$\P^3$:
$$
I(u)=\h(L_u), \quad I(\pi)=\f(L_\pi), \quad I^2=1,
$$
which is defined also for lines; if~$l \subset \P^3$ is the line
$<u,u'>$, then~$I(l)=I(u) \cap I(u')$. The line~$I(l)$ is the {\it
polar} line of~$l$ with respect to the given linear complex.
\begin{definition}
Two linear complexes are called apolar if the correspondences they
define commute.
\end{definition}

Let~$H$ be any  quadratic form in six variables such  that the
quadrics~$G=0$ and~$H=0$ intersect transversely and denote
by~$\Hh=\{x\in\P^5 \mid H(x)=0\}$. Let~$\Ww=\Gg\cap\Hh$ and
~$\Aa=$
 set of lines on~$\Ww$. The points in~$\Ww$ represent the lines
in~$\P^3$ whose Grassman coordinates~$\mathfrak{p}$ satisfy
$H(\mathfrak{p})=0$. A line~$L\in\Aa$ represents a pencil of lines
of this quadratic complex in~$\P^3$.
\begin{definition}
The Kummer surface~$\Kk \subset \P^3$ associated to the quadratic
complex~$\Hh$ is the locus of focuses of such pencils: \,
$\Kk=\{\f(L) \mid L\in \Aa\}$.
\end{definition}

\begin{definition}
The dual Kummer surface~$\Kk^* \subset {\P^3}^\checkmark$
associated to the quadratic complex~$\Hh$ is the locus of planes
of such pencils.
\end{definition}

From now on we suppose $f_6=1$.
\begin{lemma}\label{lameme}
For any curve~$\Cc$ of genus~$2$, the Kummer surface belonging to
the curve~$\Cc$ given by (\ref{courbe}) coincides with the Kummer
surface just defined, if one takes the quadratic complex $\Hh$ to
be given by
$$
\begin{array} {ll}\label{Kummer2}
H = &-4X_1X_5-4X_2X_6-X_3^2+2f_5X_3X_6+4f_0X_4^2\\
    &+4f_1X_4X_5+4f_2X_5^2 +4f_3X_5X_6+(4f_4-f_5^2)X_6^2.
\end{array}
$$
\end{lemma}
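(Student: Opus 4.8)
The plan is to verify directly that the quadratic complex $\Hh$ given by the stated form $H$ produces exactly the Kummer surface $\Kk$ belonging to $\Cc$, whose equation $(\ref{KummerEq})$ is recorded in \cite{CF}. Since both surfaces live in $\P^3$ and are irreducible quartics, it suffices to show that the locus of focuses $\f(L)$, as $L$ ranges over the lines $\Aa$ on $\Ww=\Gg\cap\Hh$, satisfies the same quartic equation. I would set up coordinates on the Grassmannian following \cite{CF}, Chapter~$17$: a line $\mathfrak p=(X_1:\ldots:X_6)$ lies in the quadratic complex iff $G(\mathfrak p)=H(\mathfrak p)=0$, and the focus $u=\f(L)$ of the pencil determined by a point $L\in\Ww$ is computed by the standard incidence relation between a line of $\Gg$ and its pencil.

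\emph{Key steps, in order.} First I would recall the explicit recipe for the focus map: given a pencil parametrized by $L\in\Gg$, its focus $u$ is the common point of the lines in the pencil, obtained by contracting the bilinear forms associated to $G$ and $H$ against $L$. Concretely, the focus and plane of the pencil are read off from the condition that the two linear complexes $G$ and $H$ both annihilate the line, and $u$ is recovered as the kernel of a suitable $4\times 4$ matrix built from $\mathfrak p$. Second, I would substitute the given coefficients of $H$ — namely the entries coming from $-4X_1X_5$, $-4X_2X_6$, $-X_3^2$, $2f_5X_3X_6$, $4f_0X_4^2$, $4f_1X_4X_5$, $4f_2X_5^2$, $4f_3X_5X_6$, and $(4f_4-f_5^2)X_6^2$ — and eliminate the Grassmann variables, using the Plücker relation $G=0$ to reduce. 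Third, I would match the resulting quartic in the focus coordinates $(u_1:u_2:u_3:u_4)$ against the Cassels--Flynn equation $(\ref{KummerEq})$, $K=K_2\xi_4^2+K_1\xi_4+K_0$, after the identification $(\xi_1:\xi_2:\xi_3:\xi_4)=(u_1:u_2:u_3:u_4)$ dictated by $(\ref{Kummer})$. The cleanest route is to check that the $16$ nodes agree: the focuses where the pencil degenerates are exactly the $16$ singular points $N_0$ and $N_{ij}$, and a quartic in $\P^3$ with $16$ nodes at prescribed points, symmetric in the Weierstrass data $\theta_1,\ldots,\theta_6$, is determined up to scalar.

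\emph{The main obstacle} I expect is the elimination itself: passing from the six Grassmann coordinates on $\Ww=\Gg\cap\Hh$ to the four focus coordinates is a genuine computation, and one must be careful that the chosen form of $H$ encodes the curve $\Cc:Y^2=F(X)$ in the correct way, i.e.\ that the symmetric functions of the roots $\theta_i$ appearing implicitly through $f_0,\ldots,f_5$ (with $f_6=1$) reproduce the same $K_0,K_1,K_2$. Rather than grind through the general elimination, I would lean on the classical identification in \cite{CF}, Chapter~$17$, of the focus map for a generic quadratic complex, specialize its output to the stated $H$, and then invoke the uniqueness of a $16$-nodal quartic through the given nodes to conclude that the two quartics coincide. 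Tracking signs and the normalization $f_6=1$ consistently between the two coordinate conventions will be the delicate bookkeeping, but no conceptual difficulty beyond the determinantal elimination should remain.
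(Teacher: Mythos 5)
The paper gives no argument of its own for this lemma: its ``proof'' is the citation to \cite{CF}, Lemma~17.3.1 and pages~182--183, and your outline is essentially the computation carried out there, so in substance you are reconstructing the cited proof rather than taking a different route. Two remarks on the details. First, the elimination is cleaner in the direction opposite to the one you emphasize: rather than eliminating the six Grassmann coordinates from the focus map, one fixes $\xi\in\P^3$, writes the plane $\Pi_\xi\subset\Gg$ of lines through $\xi$ (the relations (\ref{passer})), and expresses the condition that the conic $\Pi_\xi\cap\Hh$ be degenerate as the vanishing of a $3\times 3$ discriminant; that determinant, with the stated $H$, is directly the quartic $K$ of (\ref{KummerEq}). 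This is exactly the mechanism the paper itself reuses in the proof of Proposition~\ref{conect}. Second, your proposed shortcut --- matching only the $16$ nodes and invoking uniqueness of the quartic singular at a prescribed $16_6$ configuration --- is classically true but is the one step that would need separate justification, and it does not actually save work: to locate the nodes of the focus locus you must analyse where $\Pi_\xi\cap\Hh$ degenerates to a double line, which is the same discriminant computation you were trying to avoid, plus you would still need to know a priori that the focus locus is an irreducible quartic with exactly those $16$ nodes. So the direct determinantal comparison is both the intended argument and the shorter one; with that adjustment your plan is sound.
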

\begin{proof}
See \cite{CF}, Lemma~$17.3.1$ and pages $182 - 183$.
\end{proof}
Now, if a point~$\xi \in \P^3$ is the focus of the pencil
corresponding to the line~$L_\xi \in \Aa$, then~$L_\xi$ lies in
the plane~$\Pi_\xi \subset \Gg$ corresponding to lines in~$\P^3$
passing through~$\xi$. But then the conic~$\Pi_\xi \cap \Hh$
contains~$L_\xi$, so is degenerate; $\Pi_\xi$ is tangent to $\Hh$
and~$\Pi_\xi \cap \Hh = L_\xi \cup L'_\xi$. The lines of the
quadratic complex passing through~$\xi$ are in the two pencils
$L_\xi$ and~$L'_\xi$, each with focus~$\xi$, lying in the
planes~$\pi_\xi$ and~$\pi'_\xi$ in $\P^3$. The point $\xi$ is a
{\it singular point} of the quadratic complex. The line
$l_\xi=\pi_\xi \cap \pi'_\xi$ is represented on $\Gg$ by the
point~$\mathfrak{p}_\xi=L_\xi \cap L'_\xi$ and is called a {\it
singular line} of the quadratic complex.

If~$L_\xi \neq L'_\xi$ the pencils are distinct and $\xi$ is a
simple point of the Kummer; there is a one-to-one correspondence
$\xi \leftrightarrow \mathfrak{p}_\xi$. However, if
$L_\xi=L'_\xi$, then~$\pi_\xi=\pi'_\xi$ and all the lines
in~$L_\xi$ are singular lines. The point~$\xi$ is a singular point
of the Kummer, because the map $\mathfrak{f} : \Aa\longrightarrow
\Kk$ is algebraic. Therefore the variety $\Sigma$ parametrizing
singular lines is a desingularization of the Kummer.
\begin{remark}
The quadrics $\Ss_0$ and $\Ss_2$ in the defining equation
(\ref{p20}) of $\Ss$  are dual with respect to $\Ss_1$ and Cassels
and Flynn call for an interpretation of this duality. More
precisely, if $Q\in\Ss_0$ (respectively $Q\in\Ss_2$) then the
hyperplane
$$
\sum_{j=0}^5{\frac{\partial{S_1}}{\partial{\pi_j}}(Q)\cdot\pi_j}=0
$$
is tangent to $\Ss_2$, respectively to $\Ss_0$. Now, it is shown
in~\cite{Hu}, Section~$31$ that, if one brings~$G$ and~$H$ to
diagonal form :
$$
G : \sum_{i=1}^{6} {X_i}^2=0 \qquad \text{and}\qquad H :
\sum_{i=1}^{6} \alpha_i{X_i}^2=0,
$$
then the variety parametrizing the singular lines is
$$
\Sigma=\Gg \cap \Hh \cap \Ff \qquad\text{where}\qquad \Ff :
\sum_{i=1}^{6} \alpha^2_i{X_i}^2=0
$$
(see also~\cite{CF}, Corollary~$2$ to Lemma~$17.2.1$). Over
$\kbar$ one can change~$\pi_j\leftrightarrow \sqrt{\omega_j}\pi_j$
and so the equations of~$\Ss_0$ and~$\Ss_1$ give that of~$\Ss_2$.
This explains the duality of~$\Ss_0$ and~$\Ss_2$ with respect
to~$\Ss_1$.
\end{remark}
\begin{definition}\label{kappa1}
The birational map $\kappa_1 : \Kk \dashrightarrow \Sigma$ is
defined by $\kappa_1(\xi)=\p_\xi$.
\end{definition}
\begin{definition}\label{kappa1*}
The birational map $\kappa^*_1 : \Kk^* \dashrightarrow \Sigma$
associates to a plane $\pi$ tangent to~$\Kk$ the intersection
point of the lines in $\Aa$ parametrizing the two pencils in $\Hh$
contained in~$\pi$.
\end{definition}
It is shown in \cite{CF} and \cite{Hu} that  $\kappa_1^{-1}$ and
${\kappa_1^*}^{-1}$ extend to minimal desingularizations
$\kappa_1^{-1}:\Sigma\longrightarrow \Kk$  and
${\kappa_1^*}^{-1}:\Sigma\longrightarrow \Kk^*$.

\begin{lemma}\label{duality}
The surface $\Kk^*$ is the projective dual of $\Kk$ that is, if
$\xi=\f(L) \in \Kk$ then $\eta = \h(L) \in \Kk^*$ is the tangent
plane of $\Kk$ at $\xi$. Therefore
$\kappa_1(\xi)=\kappa^*_1(\eta)$.
\end{lemma}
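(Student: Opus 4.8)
The statement packages two claims: first, that the plane $\eta=\h(L)$ of the pencil is the tangent plane to $\Kk$ at the focus $\xi=\f(L)$ (so that the locus $\Kk^*$ of planes is exactly the dual variety of $\Kk$ in the sense of Definition \ref{Kdual}); and second, the compatibility $\kappa_1(\xi)=\kappa_1^*(\eta)$. The plan is to treat the tangent-plane claim as primary and to deduce the compatibility from it, working throughout on the Grassmannian $\Gg$ with its two rulings: an $\alpha$-plane $\Pi_u\subset\Gg$ (lines through $u$) and a $\beta$-plane $\Pi^\pi\subset\Gg$ (lines in $\pi$), so that a pencil $L\in\Aa$ is the line $L=\Pi_{\f(L)}\cap\Pi^{\h(L)}$ and $\f,\h$ simply read off its two rulings.

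For the tangent-plane claim I would compute $T_\xi\Kk$ directly. Writing $\xi\in\Kk$ as a focus, the conic $C_\xi=\Pi_\xi\cap\Hh$ is degenerate and its node is the point $\p_\xi\in\Gg$ representing the singular line $l_\xi$ through $\xi$; thus near $\xi$ the surface $\Kk$ is cut out by the discriminant $\det M(\xi)=0$ of the symmetric matrix $M(\xi)$ of $C_\xi$, whose kernel is $\p_\xi$. Differentiating a corank-one determinant gives $T_\xi\Kk=\{v:\p_\xi^{\,T}(d_vM)\,\p_\xi=0\}$, and one must show that this plane of directions $v\in T_\xi\P^3$ is precisely $\h(L)$. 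Equivalently, and more geometrically, every line $m$ of the pencil $L$ is a complex line through $\xi$, and I would show such a line has contact of order $\ge 2$ with $\Kk$ at $\xi$; since these lines sweep out $\h(L)$, that plane is the tangent plane. The cleanest way to make the computation effective is to bring $G$ and $H$ to the simultaneous diagonal form $G=\sum X_i^2$, $H=\sum\alpha_iX_i^2$ of the Remark (so $\Ff=\sum\alpha_i^2X_i^2$), in which $\Pi_\xi$, $\Pi^\eta$, the conics, and their nodes are all written explicitly; this is the route followed classically in \cite{Hu}, Section~31, and in \cite{CF}, Chapter~17.

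Granting the tangent-plane claim, the compatibility follows by identifying both sides with the singular line. By definition $\kappa_1(\xi)=\p_\xi$ is the node of $C_\xi=\Pi_\xi\cap\Hh$, i.e.\ the unique point of $L$ at which the $\alpha$-plane $\Pi_\xi$ is tangent to $\Hh$; dually $\kappa_1^*(\eta)$ is the node of $\Pi^\eta\cap\Hh$, the point of $L$ at which the $\beta$-plane $\Pi^\eta$ is tangent to $\Hh$. Both nodes lie on the common line $L=\Pi_\xi\cap\Pi^\eta$, both represent singular lines of the complex, and hence both lie on $\Sigma=\Gg\cap\Hh\cap\Ff$; since $L\subset\Gg\cap\Hh$, they are among the two points of $L\cap\Ff$. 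It then remains to see that they are the \emph{same} point, equivalently that the singular line $l_\xi$ through $\xi$ (which lies in $\eta=T_\xi\Kk$ by the previous step) passes through the second focus of the two pencils lying in $\eta$; this last incidence is again immediate in the diagonal model and yields $\kappa_1(\xi)=\kappa_1^*(\eta)$.

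The main obstacle will be the tangent-plane identification itself. A subtle point to keep in mind while organizing the argument is that $\f:\Aa\to\Kk$ and $\h:\Aa\to\Kk^*$ are each two-to-one, whereas the Gauss correspondence $\xi\mapsto\eta$ is birational; the reconciliation is that it is the singular line $\p_\xi$, not the pencil, that carries the one-to-one duality, so the whole computation should be run on $\Sigma$ (equivalently, through the node of the degenerate conic) rather than on $\Aa$. Carrying out the first-order computation of $T_\xi\Kk$, namely showing that the focus of a deforming pencil moves only within the plane of the pencil, is the step that requires genuine work; the diagonalization above reduces it to a finite verification, which is why I would anchor the argument there and in the cited classical treatments.
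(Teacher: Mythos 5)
The paper itself offers no argument for this lemma (its ``proof'' is the citation to \cite{CF}, p.~181), so the only question is whether your proposal would go through, and I do not believe it would: the step you rightly single out as the main obstacle --- identifying $\h(L)$ with the tangent plane at $\f(L)$ --- is not merely hard but fails as you have set it up, and both of your proposed routes to it break down. The quickest way to see the problem is a symmetry your plan never confronts: at a smooth point $\xi\in\Kk$ the paper's own discussion produces \emph{two distinct} pencils $L_\xi\neq L'_\xi$ with focus $\xi$, lying in \emph{distinct} planes $\pi_\xi\neq\pi'_\xi$ (a pencil is determined by its focus together with its plane). If ``$\h(L)=T_{\f(L)}\Kk$'' held for every $L\in\Aa$, applying it to $L_\xi$ and to $L'_\xi$ would force $\pi_\xi=T_\xi\Kk=\pi'_\xi$, which happens only at the nodes. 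Concretely, your own corank-one determinant formula, when carried out, gives $T_\xi\Kk=\{v:\widetilde{H}(v\wedge w,\,\p_\xi)=0\}$, where $\widetilde{H}$ is the bilinear form polarizing $H$ and $w$ is any second point of the singular line $l_\xi$; this is a plane containing $l_\xi$, but it is a \emph{third} member of the pencil of planes through $l_\xi$, distinct from both $\pi_\xi$ and $\pi'_\xi$ (a direct check in the diagonal model confirms this). Likewise, among the lines of the pencil $L_\xi$ only $l_\xi$ itself has contact of order $\ge 2$ with $\Kk$ at $\xi$ (a line through a smooth $\xi$ is tangent iff it lies in $T_\xi\Kk$, and $\pi_\xi\cap T_\xi\Kk=l_\xi$), so the ``these lines sweep out $\h(L)$'' argument collapses. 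Your second step inherits the defect, since it takes $\Pi_\xi\cap\Pi^\eta$ to be a pencil of the complex, i.e.\ a line contained in $\Hh$, which is exactly the first claim.

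What is true, and what the paper actually uses later (in Corollary \ref{epsilon2}), is the set-theoretic statement that $\Kk^*$ is the dual surface together with $\kappa_1(\xi)=\kappa_1^*(T_\xi\Kk)$; and your determinant computation is the right tool for \emph{that}. From $T_\xi\Kk=\{v:\widetilde{H}(v\wedge w,\p_\xi)=0\}$ one checks directly that the conic $\Pi^{T_\xi\Kk}\cap\Hh$ is degenerate with node exactly $\p_\xi$; this gives $T_\xi\Kk\in\Kk^*$ (hence equality of $\Kk^*$ with the dual variety, both being irreducible surfaces) and $\kappa_1^*(T_\xi\Kk)=\p_\xi=\kappa_1(\xi)$ in one stroke. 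The two pencils contained in $T_\xi\Kk$ have their foci at the two residual intersections of $l_\xi$ with $\Kk$, not at $\xi$: the plane of a pencil is indeed a tangent plane of $\Kk$, but not at that pencil's own focus. Your argument needs to be reorganized around this corrected statement rather than around the literal first clause of the lemma.
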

\begin{proof}
See \cite{CF}, page $181$.
\end{proof}
\subsection{Connection between $\Ss$ and $\Sigma$}
Denote by $G(\overrightarrow{X}, \overrightarrow{Y})$ the bilinear
form associated to the Grassmanian $G$. Make the change of
coordinates
\begin{equation}\label{XZE}
\zeta_i =
\frac{G(\overrightarrow{X},\overrightarrow{v}(\theta_i))}{\sqrt{\omega_i}},
\end{equation}
with vectors $\overrightarrow{v}(\theta_i)$ as in \cite{CF}
formula ($17.4.3$). The desingularisation of the Kummer surface
corresponding to the quadratic complex $H$ of Lemma \ref{lameme}
is the~$K3$ surface~$\Sigma$ given as the complete intersection in
$\P^5$ of the three quadrics $\Sigma_0$, $\Sigma_1$, $\Sigma_2$
where
$$
\Sigma_i :\qquad
 \sum_j \theta_j^i \zeta_j^2 = 0 \quad \text{for}
\quad i=0,1,2.
$$
(see also \cite{Hu}, Section~$31$).

Let $\Theta : \Sigma \longrightarrow \Ss$ be defined by
\begin{equation}\label{THETA}
\Theta(\zeta_1:\dots :\zeta_6) =
 \left(
 \frac{\zeta_1}{\sqrt{\omega_1}}:\dots:\frac{\zeta_6}{\sqrt{\omega_6}}
 \right) =
 (\pi_1:\dots:\pi_6) .
\end{equation}
To write $\Theta$ in variables  $X_j$ on $\Sigma$ and $p_j$ on
$\Ss$, recall that  \,$P(X)=\sum_0^5 p_j X^j$ and note that by
(\ref{linearcombination}) and (\ref{XZE}):
$$
\frac{P(\theta_i)}{\omega_i}=\pi_i =
 \frac{\zeta_i}{\sqrt{\omega_i}} =
  \frac{G(\overrightarrow{X},\overrightarrow{v}(\theta_i))}{\omega_i}.
$$
Now, as polynomials in $X$, we have
$G(\overrightarrow{X},\overrightarrow{v}(X)) =P(X)$, because they
have degree $5$ and agree on the six $\theta_i$. Explicit formulae
for $\Theta$ are
$$
\begin{array}{lll}
p_0 = X_1 + f_1 X_4          & p_2 = X_3 + 2 f_4 X_5 + 2 f_3 X_4 +
f_5 X_6
                             & p_4 =2 f_5 X_4 + 2 X_5\\
p_1 =X_2+2 f_2 X_4 + f_3 X_5 & p_3 = 2 f_4 X_4 + 2 f_5 X_5 + 2 X_6
                             & p_5 = 2 X_4 .
\end{array}
$$


\begin{proposition}\label{conect}
Denoting by $\kappa^{-1}$ and $\kappa_1^{-1}$ the blow-downs from
$\Ss$, respectively $\Sigma$ to $\Kk$ one has $\kappa_1^{-1} =
\kappa^{-1}\circ \Theta$.
\end{proposition}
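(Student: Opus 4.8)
The three ingredients are genuine morphisms on the relevant domains: $\kappa^{-1}\colon\Ss\to\Kk$ and $\kappa_1^{-1}\colon\Sigma\to\Kk$ extend to the minimal desingularizations recalled above, and $\Theta\colon\Sigma\to\Ss$ of (\ref{THETA}) is an isomorphism. Hence both $\kappa_1^{-1}$ and $\kappa^{-1}\circ\Theta$ are morphisms $\Sigma\to\Kk$. Since $\Sigma$ is an integral variety and $\Kk$ is separated, the locus on which two such morphisms agree is closed, and if it contains a dense open set it must be all of $\Sigma$. The plan is therefore to prove the equality only generically and then invoke this principle.

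On a dense open subset of $\Kk$ the maps $\kappa$ and $\kappa_1$ are isomorphisms onto their images, so composing with their inverses reduces the claim $\kappa_1^{-1}=\kappa^{-1}\circ\Theta$ to the equivalent generic statement $\kappa=\Theta\circ\kappa_1$: for a general $\xi\in\Kk$ one must check $\Theta(\kappa_1(\xi))=\kappa(\xi)$. Here I would use the coordinate description preceding the proposition. By (\ref{linearcombination}), (\ref{XZE}) and the polynomial identity $G(\overrightarrow{X},\overrightarrow{v}(X))=P(X)$, the map $\Theta$ sends a point $\overrightarrow{X}\in\Sigma$ to the point of $\Ss$ represented by the polynomial $X\mapsto G(\overrightarrow{X},\overrightarrow{v}(X))$. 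Writing $\p_\xi=\kappa_1(\xi)$ for the Grassmann coordinates of the singular line $l_\xi$ (Definition \ref{kappa1}), this says that $\Theta(\kappa_1(\xi))$ is the point of $\Ss$ given by the polynomial $Q_\xi(X)=G(\overrightarrow{\p_\xi},\overrightarrow{v}(X))$.

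It then remains to identify $Q_\xi$ with the Cassels--Flynn polynomial $P$ of Theorem \ref{theoA}, the unique one of degree at most $5$ with $(X-x)(X-u)\,P\equiv M\bmod F$. Both $Q_\xi$ and $P$ have degree at most $5$, so, exactly as in the identification $G(\overrightarrow{X},\overrightarrow{v}(X))=P(X)$ used above, it suffices to show that they agree at the six roots $\theta_i$ of $F$, equivalently that the associated coordinates $\pi_i$ coincide. Since $F(\theta_i)=0$, the defining congruence gives $P(\theta_i)=M(\theta_i)/\bigl((\theta_i-x)(\theta_i-u)\bigr)$, so the task becomes the pointwise comparison
$$
G(\overrightarrow{\p_\xi},\overrightarrow{v}(\theta_i))=\frac{M(\theta_i)}{(\theta_i-x)(\theta_i-u)}\qquad(1\le i\le 6).
$$
I expect this last identity to be the main obstacle. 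Its proof requires making the singular line $l_\xi$ explicit from the quadratic complex $\Hh$ of Lemma \ref{lameme} and the vectors $\overrightarrow{v}(\theta_i)$, and then evaluating the Grassmann bilinear form; this is precisely the point at which the line-complex construction of $\Kk$ must be matched with the divisor construction, so I would carry it out using the correspondence of \cite{CF}, Chapter $17$ (with Theorem \ref{theoB} and Lemma \ref{duality} keeping track of the duality that both pairs of maps respect). Once the six scalar identities are verified, $Q_\xi=P$, hence $\Theta(\kappa_1(\xi))=\kappa(\xi)$ generically, and the morphism principle of the first paragraph upgrades this to the equality $\kappa_1^{-1}=\kappa^{-1}\circ\Theta$ on all of $\Sigma$.
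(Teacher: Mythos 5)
Your overall strategy is the same as the paper's: both reduce the statement to the generic identity $\Theta\circ\kappa_1(\xi)=\kappa(\xi)$ on $\Kk$ and then extend by the standard rigidity of morphisms from an integral variety to a separated one. Your reorganization of the final comparison --- checking that the degree-$\le 5$ polynomials $Q_\xi(X)=G(\overrightarrow{\p_\xi},\overrightarrow{v}(X))$ and the Cassels--Flynn polynomial $P$ agree at the six roots $\theta_i$, i.e.\ comparing $\pi_i$-coordinates --- is sound and is equivalent to what the paper does, namely checking that the cross-products of the two coordinate vectors vanish on $\Kk$.

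The genuine gap is that you stop exactly at the step that carries the entire content of the proposition. The six scalar identities $G(\overrightarrow{\p_\xi},\overrightarrow{v}(\theta_i))\doteq M(\theta_i)/\bigl((\theta_i-x)(\theta_i-u)\bigr)$ are precisely the assertion that the singular line of the quadratic complex through $\xi$ matches the divisor-theoretic polynomial of Theorem~\ref{theoA}; you write that you ``expect this to be the main obstacle'' and describe where one would look, but you do not establish it, and nothing earlier in your argument makes it automatic. The paper closes this by an explicit computation: it determines $\p_\xi=L_\xi\cap L'_\xi$ as quartic forms $(X_1(\xi):\dots:X_6(\xi))$ from the tangency condition $\Pi_\xi\cap\Hh=L_\xi\cup L'_\xi$ with $\Hh$ as in Lemma~\ref{lameme}, pushes these through the linear map $\Theta$ to get $\hat p_j(\xi)$, and verifies against the Appendix formulas of Lemma~\ref{mapkappa} that $\hat p_i p_5-\hat p_5 p_i=\delta_i K$, so that the two projective points coincide wherever $K=0$. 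Note also that Lemma~\ref{mapkappa} (the explicit formulas for $\kappa$) is an essential input here, not just Theorem~\ref{theoA}: without some explicit handle on both sides there is nothing to compare. As written, your text is a correct plan for the proof rather than a proof; to complete it you must either carry out the evaluation of the Grassmann form on the singular line or supply an a priori argument identifying the two constructions, and neither is present.
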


\begin{proof}

Pick a point $\xi\in\P^3$ and write the equations of the plane
$\Pi_{\xi}\subset\Gg$ of lines through $\xi$ (see (\ref{passer})).
Take $\Hh$ to be defined as in Lemma \ref{lameme}. As seen,
$\Pi_{\xi}$ is tangent to $\Hh$ iff the intersection consists of
two lines:
$$
\Pi_{\xi}\cap\Hh=L_{\xi}\cup L'_{\xi}.
$$
Computing in terms of $\xi$ the coordinates of
$\p_{\xi}=L_{\xi}\cap L'_{\xi}$, we find homogeneous formulae for
$X_i$ in $\xi_i$ of degree $4$:
$$
\p_{\xi}=(X_1(\xi):\dots :X_6(\xi))=\kappa_1(\xi).
$$
We compare now
$$
\Theta\circ\kappa_1(\xi)=(\hat{p}_0(\xi):\dots :\hat{p}_5(\xi))
:\Kk \dashrightarrow \Ss
$$
with $\kappa(\xi)=(p_0(\xi):\dots :p_5(\xi))$ from Lemma
\ref{mapkappa} and obtain
$$
\hat{p}_i p_ 5 - \hat{p}_5 p_i = \delta_i K \qquad\text{with}\,\,
K \,\,\text{given by}\,\ (\ref{KummerEq}),
$$
for $\delta_i$ a homogeneous polynomial in $\xi$.
\end{proof}

Associated with a quadratic complex~$\Hh : H = 0$ there is a set
of~$6$ mutually apolar linear complexes~$\Ll_k$, such that the
polar of any line in~$\Hh$ with respect to~$\Ll_k$ is in~$\Hh$.
If~$G$ and~$H$ are written in diagonal form, these complexes are
$$
\Ll_k : \zeta_k=0\quad\text{for}\quad k=1,\ldots  6.
$$
The action of the  correspondences~$I_k$ on lines in~$\P^3$
translates in coordinates~$\zeta=(\zeta_1 : \ldots : \zeta_6)$ by
\begin{equation}\label{polarite}
I_k(\zeta_i)=(-1)^{\delta_{ik}}\zeta_i,
\end{equation}
which restricts to $\Sigma$. The Kummer is determined by $\Hh$, so
must be invariant under the transformation $I_k$. Therefore the
set of nodes and tropes is invariant (see \cite{Hu},
Section~$30$).

If $u\in\P^3$, we have $I_k(u)=\h(L_{k,u})$ in our previous
notation. Let~$I_{jk}=I_j\circ I_k$. Since
$$
I_j\circ I_{jk}(u) = I_k(u),
$$
we have $I_k(u)=$ plane of lines in~$\Ll_j$ passing through
$I_{jk}(u)$. So, if~$N$ is a node, each plane $I_k(N)$ passes both
through the nodes $N$ and $I_{jk}(N)$ for $j\neq k$, so  is a
trope.

Now let $W_i$ be as in the proof of Lemma \ref{structure}.

\begin{proposition}\label{epsilon}
For any~$k$, the map~$I_k$ is the unique automophism of $\Sigma$
such that the following diagram is commutative:
\begin{equation}\label{Iepsi}
\begin{array}{ccc}
\Sigma                 & \harr{I_k}{} &  \Sigma  \\
\varr{\kappa_1^{-1}}{} &              & \varr{}{{\kappa_1^*}^{-1}} \\
\Kk\,                  & \harr{}{W_k} & \Kk^* .
\end{array}
\end{equation}
\end{proposition}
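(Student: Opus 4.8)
The plan is to prove existence (the square (\ref{Iepsi}) commutes for $\phi=I_k$) and then uniqueness, working throughout on $\Sigma$ via the geometry of the quadratic complex $\Hh$ and its apolar linear complex $\Ll_k$. First I would check that $I_k$ is an automorphism of $\Sigma$: in the diagonalizing coordinates $\zeta$ it acts by the sign change (\ref{polarite}), which fixes each defining equation $\sum_j\theta_j^i\zeta_j^2=0$ since only the squares $\zeta_j^2$ occur. Geometrically this is apolarity: the null polarity $I_k$ carries lines of the complex to lines of the complex, hence singular lines to singular lines.

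The core of existence is that the correlation $I_k:\P^3\to{\P^3}^\checkmark$ interchanges the two blow-downs. I would first record that $I_k$ sends a pencil with focus $u$ and plane $\pi$ to the pencil with focus $I_k(\pi)$ and plane $I_k(u)$, because a null polarity reverses the incidence of points and planes while carrying lines to their polars. Through a singular line $\p\in\Sigma$ pass two distinguished pairs of complex pencils: the focus-type pair $L_\xi,L'_\xi$ with common focus $\xi=\kappa_1^{-1}(\p)$, and the plane-type pair with common plane $\eta={\kappa_1^*}^{-1}(\p)$. Applying $I_k$ turns the focus-type pair into a plane-type pair with common plane $I_k(\xi)$, and the plane-type pair into a focus-type pair with common focus $I_k(\eta)$; reading off the focus and plane of the polar singular line $I_k(\p)$ then gives
\[
\kappa_1^{-1}\circ I_k=I_k\circ{\kappa_1^*}^{-1}\qquad\text{and}\qquad{\kappa_1^*}^{-1}\circ I_k=I_k\circ\kappa_1^{-1},
\]
where on the right $I_k$ denotes the correlation restricted to $\Kk^*$, respectively $\Kk$. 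The second identity is exactly the commutativity of (\ref{Iepsi}) with $W_k$ replaced by $I_k|_\Kk$.

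It then remains to identify the correlation $I_k|_\Kk:\Kk\to\Kk^*$ with the linear map $W_k$. Both are restrictions of projective correlations of $\P^3$, so they coincide once they agree on a projective frame, and I would match them on the sixteen nodes. The map $W_k$ sends $N_0$ to the trope $T_k$; by the discussion preceding the statement $I_k(N_0)$ is the plane through $N_0$ and through every $I_{jk}(N_0)=N_{jk}$ with $j\neq k$, which is again $T_k$. More generally $I_i\circ I_j$ and $W_i^{-1}\circ W_j$ both carry $N_0$ to $N_{ij}$, so $I_k$ and $W_k$ act identically on the node set; as five nodes lie in general position this forces $I_k|_\Kk=W_k$, and the square commutes. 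I expect this identification to be the main obstacle: one must verify that the labelling of nodes produced by the sign changes (\ref{polarite}) matches the one produced by addition of Weierstrass points, i.e. that the two-torsion $(\Z/2)^4$ seen by the complex-theoretic $I_k$ agrees with that seen by the Jacobian-theoretic $W_k$, and that enough nodes avoid the trope planes to span a frame.

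Uniqueness is then formal. Since ${\kappa_1^*}^{-1}:\Sigma\to\Kk^*$ is a birational morphism with birational inverse $\kappa_1^*$, any automorphism $\phi$ making (\ref{Iepsi}) commute must satisfy $\phi=\kappa_1^*\circ W_k\circ\kappa_1^{-1}$ as a rational self-map of $\Sigma$. This pins down $\phi$ on a dense open set, and an automorphism of the smooth projective surface $\Sigma$ is determined by its restriction to any dense open set, so $\phi=I_k$.
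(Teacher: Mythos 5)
Your existence argument is essentially the paper's: apolarity of $\Ll_k$ and $\Hh$ sends the two pencils $L_\xi ,L'_\xi$ with common focus $\xi$ to two pencils of the quadratic complex lying in the polar plane of $\xi$ with respect to $\Ll_k$, so that by Definition \ref{kappa1*} the image of that plane under $\kappa_1^*$ is $I_k(L_\xi)\cap I_k(L'_\xi)=I_k(\p_\xi)$; and your uniqueness paragraph is correct and routine. The genuine gap is in the step where you replace the correlation $I_k$, viewed as a map $\Kk\to\Kk^*$, by $W_k$. The paper does not argue via nodes at all: Lemma \ref{polar} proves by a direct computation, using the explicit antisymmetric matrix $A_k$ of Lemma \ref{Wi} and the explicit coefficients $v_i$ defining $\Ll_k$, that for \emph{every} $\xi\in\P^3$ the polar plane of $\xi$ with respect to $\Ll_k$ has dual coordinates exactly $W_k(\xi)$. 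That one computation is the substantive content of the proposition; the rest is soft.

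Your substitute --- matching $I_k$ and $W_k$ on the sixteen nodes --- founders exactly where you say it might. What the geometry gives for free is only that $I_k(N)$ is \emph{some} trope through $N$ and that $I_{jk}(N)$ is \emph{some} node; the identities $I_k(N_0)=T_k$ and $I_{jk}(N_0)=N_{jk}$ with the \emph{same} indices are precisely the assertion that the labelling of the six mutually apolar complexes $\Ll_1,\dots,\Ll_6$ (coming from diagonalization by the roots $\theta_i$) agrees with the labelling by Weierstrass points. You flag this as ``the main obstacle'' and then use it anyway (``both carry $N_0$ to $N_{ij}$''), so as written the argument assumes what it must prove. Closing the gap requires either a coordinate computation equivalent to Lemma \ref{polar}, or tracing through the change of variables (\ref{XZE}) and \cite{CF} (17.4.3)--(17.4.5) to see which $\zeta_i$ vanishes on the pencils through which node --- at which point one has reproved Lemma \ref{polar}. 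A smaller unchecked point: to conclude that two correlations agreeing on the node set coincide, you need five nodes no four of which lie in a plane; this holds for the $16_6$ configuration but deserves a word, since every trope contains six nodes.
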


\begin{proof} Let~$\xi\in\Kk$ be a simple point and denote
$\p_\xi=\kappa_1(\xi)$. For a subset $V \subset \Gg$, put
$$
I_k(V)=\{ I_k(l) \in \Gg \mid l \in V \} .
$$
The pencils $I_k(L_\xi)$ and $I_k(L'_\xi)$ are both contained in
the polar plane of $\xi$ with respect to $\Ll_k$, which by Lemma
\ref{polar} is $W_k(\xi)$.  The plane in $\P^5$ parametrizing
lines in $W_k(\xi)$ is therefore tangent to $\Hh$ at
$I_k(L_\xi)\cap I_k(L'_\xi)=I_k(L_\xi\cap
L'_\xi)=I_k(\p_\xi)=I_k\circ\kappa_1(\xi)$. By definition of
$\kappa_1^*$ we have $\kappa_1^*(W_k(\xi))=I_k\circ\kappa_1(\xi)$.
\end{proof}

The following corollary illustrates how the projective duality
(over $k(\theta_k)$) between $\Kk$ and $\Kk^*$ lifts to $\Ss$.
\begin{corollary}\label{epsilon2}
For any~$k$, the map~$\varepsilon^{(k)}$ is the unique automophism
of $\Ss$ such that the following diagram is commutative:
\begin{equation}\label{epsi}
\begin{array}{ccc}
\Ss               & \harr{\varepsilon^{(k)}}{} &  \Ss  \\
\varr{\kappa^{-1}}{}        &                & \varr{}{{\kappa^*}^{-1}} \\
\Kk\,  & \harr{}{W_k}   & \Kk^* .
\end{array}
\end{equation}
\end{corollary}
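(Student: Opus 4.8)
The plan is to deduce the statement from the already-established version over $\Sigma$, namely Proposition~\ref{epsilon}, by transporting that diagram through the isomorphism $\Theta:\Sigma\longrightarrow\Ss$ of (\ref{THETA}). Concretely, I would show that the squares relating the $\Sigma$-picture and the $\Ss$-picture commute, so that the commutativity of (\ref{Iepsi}) forces that of (\ref{epsi}). The three compatibilities I need are: the vertical identity $\kappa^{-1}\circ\Theta=\kappa_1^{-1}$, which is exactly Proposition~\ref{conect}; its dual counterpart ${\kappa^*}^{-1}\circ\Theta={\kappa_1^*}^{-1}$; and the horizontal identity $\varepsilon^{(k)}\circ\Theta=\Theta\circ I_k$, which conjugates $I_k$ into $\varepsilon^{(k)}$.

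The horizontal identity is immediate from the coordinate descriptions. By (\ref{polarite}) the automorphism $I_k$ acts on $\Sigma$ by $\zeta_i\mapsto(-1)^{\delta_{ik}}\zeta_i$, while by (\ref{piji}) the involution $\varepsilon^{(k)}$ acts on $\Ss$ by $\pi_i\mapsto(-1)^{\delta_{ik}}\pi_i$, and by (\ref{THETA}) the map $\Theta$ is the diagonal rescaling $\pi_i=\zeta_i/\sqrt{\omega_i}$. Hence both routes send $\zeta_i$ to $(-1)^{\delta_{ik}}\zeta_i/\sqrt{\omega_i}=(-1)^{\delta_{ik}}\pi_i$, so $\varepsilon^{(k)}\circ\Theta=\Theta\circ I_k$. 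I would stress here that although $\Theta$ itself only rationalizes over $\kbar$, the square roots $\sqrt{\omega_i}$ cancel in the conjugate $\Theta\circ I_k\circ\Theta^{-1}=\varepsilon^{(k)}$, so the resulting automorphism is defined over the small field (here $k(\theta_k)$) where it is expected to live; this is the content of the parenthetical remark preceding the corollary. I would also recall that $\varepsilon^{(k)}$ genuinely preserves $\Ss$, being one of the commuting involutions of Definition~\ref{Inv}.

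The main work is the dual compatibility ${\kappa^*}^{-1}\circ\Theta={\kappa_1^*}^{-1}$, since Proposition~\ref{conect} is stated only for the non-dual blow-downs. I would obtain it by combining the two duality statements already at our disposal. Fix a general (simple) point $\p\in\Sigma$ and set $q=\Theta(\p)\in\Ss$, $\xi=\kappa_1^{-1}(\p)=\kappa^{-1}(q)$ (the second equality by Proposition~\ref{conect}), and $\eta={\kappa_1^*}^{-1}(\p)$. Since $\kappa_1(\xi)=\p=\kappa_1^*(\eta)$, Lemma~\ref{duality} tells us that $\eta$ is precisely the tangent plane to $\Kk$ at $\xi$. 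Theorem~\ref{theoB} then gives $\kappa(\xi)=\kappa^*(\eta)$; but $\kappa(\xi)=q$, so $\kappa^*(\eta)=q$, that is ${\kappa^*}^{-1}(q)=\eta={\kappa_1^*}^{-1}(\p)$. As this holds on a dense open subset and all maps in sight are morphisms (the extended blow-downs, composed with isomorphisms), the identity ${\kappa^*}^{-1}\circ\Theta={\kappa_1^*}^{-1}$ holds everywhere.

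With the three compatibilities in hand the conclusion is formal. Precomposing $W_k\circ\kappa^{-1}$ with the isomorphism $\Theta$ and using in turn Proposition~\ref{conect}, the commutativity of (\ref{Iepsi}) from Proposition~\ref{epsilon}, the dual compatibility, and the horizontal identity, gives
$$
W_k\circ\kappa^{-1}\circ\Theta=W_k\circ\kappa_1^{-1}={\kappa_1^*}^{-1}\circ I_k={\kappa^*}^{-1}\circ\Theta\circ I_k={\kappa^*}^{-1}\circ\varepsilon^{(k)}\circ\Theta .
$$
Cancelling the isomorphism $\Theta$ on the right yields $W_k\circ\kappa^{-1}={\kappa^*}^{-1}\circ\varepsilon^{(k)}$, which is the commutativity of (\ref{epsi}). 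For uniqueness, if $\phi$ is any automorphism of $\Ss$ making (\ref{epsi}) commute, then $\Theta^{-1}\circ\phi\circ\Theta$ makes (\ref{Iepsi}) commute, whence $\Theta^{-1}\circ\phi\circ\Theta=I_k$ by the uniqueness clause of Proposition~\ref{epsilon}, and therefore $\phi=\Theta\circ I_k\circ\Theta^{-1}=\varepsilon^{(k)}$. The only genuine obstacle is the dual compatibility above; everything else is transport of structure through $\Theta$.
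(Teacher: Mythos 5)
Your proposal is correct and follows essentially the same route as the paper: it establishes ${\kappa^*}^{-1}\circ\Theta={\kappa_1^*}^{-1}$ by combining Lemma~\ref{duality}, Proposition~\ref{conect} and Theorem~\ref{theoB} (the paper's chain (\ref{19})), verifies $\Theta\circ I_k\circ\Theta^{-1}=\varepsilon^{(k)}$ from the coordinate formulas (\ref{THETA}), (\ref{polarite}), (\ref{piji}), and then transports the commutative diagram (\ref{Iepsi}) of Proposition~\ref{epsilon} through $\Theta$. Your additional remarks on extending from a dense open set and on deducing uniqueness from the uniqueness clause of Proposition~\ref{epsilon} are harmless elaborations of what the paper leaves implicit.
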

\begin{proof}

Let $\xi\in\Kk$ and  $\eta\in\Kk^*$ be dual. We have :
\begin{equation}\label{19}
\Theta \circ \kappa_1^*(\eta) \, \mathop{=}^{\ref{duality}} \,
\Theta \circ \kappa_1(\xi) \, \mathop{=}^{\ref{conect}} \,
\kappa(\xi) \, \mathop{=}^{\ref{theoB}} \, \kappa^*(\eta) .
\end{equation}
Note that $\Theta \circ I_k \circ \Theta^{-1} =
\varepsilon^{(k)}$, by (\ref{THETA}), \,(\ref{polarite}) and
(\ref{piji}). Therefore
$$
{\kappa^*}^{-1}\circ\varepsilon^{(k)} \, \mathop{=}^{(\ref{19})}
\, {\kappa_1^*}^{-1} \circ \Theta^{-1} \circ \Theta \circ I_k
\circ \Theta^{-1} \, \mathop{=}^{\ref{epsilon}} \, W_k \circ
\kappa_1^{-1} \circ \Theta^{-1} \, \mathop{=}^{\ref{conect}} \,
W_k \circ \kappa^{-1}.
$$
This is summarized in the following diagram
$$
\begin{matrix}
\Ss & \displaystyle
      \mathop{\longleftarrow}^{\Theta}
                     & \Sigma  & \displaystyle
                                 \mathop{\longrightarrow}^{I_k}
                                                   & \Sigma &
                                                     \displaystyle
                                                     \mathop{\longrightarrow}^{\Theta}
                                                            & \Ss \\
    &  \searrow      & \downarrow
                               &                & \downarrow &
                                                    \swarrow \\
    &                & \Kk     & \displaystyle
                                 \mathop{\longrightarrow}_{W_k}
                                                & K^*
\end{matrix}
$$
\end{proof}


Now Corollary~\ref{epsilon2} is useful for finding explicit
formulae for~$\kappa^*$, simply because
$$
\kappa^*=\kappa^*\circ W_i\circ\kappa^{-1}\circ\kappa\circ
W_i^{-1}=\varepsilon^{(i)}\circ\kappa\circ W_i^{-1}
$$
on an open dense set in~$\Kk^*$. The resulting formulae are huge
and not listed in this paper, since on a given example it is much
easier  to apply successively each map involved.

We now look for the formulae for~$W_i$. Since we need only one
transformation~$W_i$ to find~$\kappa^*$, we may suppose that {\it
all} the roots of ~$F$ are non-zero, for else we use the formulae
given in~\cite{CF}, Chapter~$4$.

\begin{lemma}\label{Wi}
Let~$\theta_i$ be a root of~$F$ and recall that $f_6=1$. Then the
transformation \newline $W_i : \Kk \longrightarrow \Kk^*$
corresponding to the addition of the Weierstrass point
$(\theta_i,0)$ has the following antisymmetric matrix:
$$
A_i=\left(\begin{array}{clcr} 0 & -f_1-2\frac{f_0}{\theta_i} &
a_{13} & \theta_i^2 \cr f_1+2\frac{f_0}{\theta_i} & 0 &
\theta_i^2(f_5+2\theta_i) & -\theta_i \cr -a_{13} &
-\theta_i^2(f_5+2\theta_i) & 0 & 1 \cr -\theta_i^2 & \theta_i & -1
& 0
\end{array}\right)
$$
where~$a_{13}=\theta_i(f_3+2f_4\theta_i+2f_5\theta_i^2+2\theta_i^3)$.
\end{lemma}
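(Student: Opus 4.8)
The plan is to use that $W_i$ is already known, from \cite{CF}, Chapter~$4$, Section~$5$, to be a projective linear isomorphism $\Kk\to\Kk^*$; only its matrix has to be pinned down. First I would record why that matrix can be taken antisymmetric. As used in the proof of Proposition~\ref{epsilon}, $W_i$ is the polarity attached to the linear complex $\Ll_i$, and a correlation $\P^3\to(\P^3)^\checkmark$ coming from a line complex of degree~$1$ (Definition~\ref{linear}) is exactly a null system, hence is represented by an antisymmetric $4\times 4$ matrix. Equivalently, each $\xi\in\Kk$ lies on the plane $W_i(\xi)$, so $\xi^{t}A_i\,\xi\equiv 0$ identically and $A_i^{t}=-A_i$. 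Thus $A_i$ is determined up to a scalar by its six superdiagonal entries, that is by five parameters.

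Next I would fix these parameters by evaluating $W_i$ on nodes whose images are known. Since $N_0$ corresponds to the canonical class, $W_i(N_0)$ is the trope $T_i$ of (\ref{tro}), i.e. the point $(\theta_i^2:-\theta_i:1:0)\in(\P^3)^\checkmark$; comparing with $A_i(0:0:0:1)^{t}$, which is the last column of $A_i$, already produces $a_{14}=\theta_i^2$, $a_{24}=-\theta_i$, $a_{34}=1$ and fixes the global scale. For the three remaining entries $a_{12},a_{13},a_{23}$ I would use the relation $W_i^{-1}\circ W_j(N_0)=N_{ij}$ recalled in the proof of Lemma~\ref{structure}, which gives $W_i(N_{ij})=W_j(N_0)=T_j$. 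Writing $N_{ij}=(1:\theta_i+\theta_j:\theta_i\theta_j:\beta_0^{ij})$ with $\beta_0^{ij}=F_0(\theta_i,\theta_j)/(\theta_i-\theta_j)^2$, and imposing $A_i\,N_{ij}\parallel(\theta_j^2:-\theta_j:1:0)$, yields for each $j\ne i$ two linear conditions on $(a_{12},a_{13},a_{23})$; two choices of $j$ already suffice to determine them, the others serving as consistency checks.

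It then remains to simplify the solution into the stated closed forms. Here I would substitute the explicit $\beta_0^{ij}$, expand $F_0$, and rewrite everything through the elementary symmetric functions of $\theta_1,\dots,\theta_6$, i.e. through $f_0,\dots,f_5$ (recall $f_6=1$). The identity $F(\theta_i)=0$ is used to eliminate $f_0$: it gives $f_0/\theta_i=-(f_1+f_2\theta_i+\dots+f_5\theta_i^4+\theta_i^5)$, which is exactly what turns $a_{12}=-f_1-2f_0/\theta_i$ into a polynomial in $\theta_i$ and explains the hypothesis $\theta_i\ne 0$; the same reductions deliver $a_{13}=\theta_i(f_3+2f_4\theta_i+2f_5\theta_i^2+2\theta_i^3)$ and $a_{23}=\theta_i^2(f_5+2\theta_i)$.

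The main obstacle is the last step: the $\beta_0^{ij}$-dependent conditions are genuinely messy, and one must check that the overdetermined system (two conditions for each of the five $j\ne i$) is consistent and collapses onto the three stated entries. This consistency is not formal but encodes the symmetric-function identities coming from $F=\prod_k(X-\theta_k)$; the real work is to organize the computation so that these identities surface cleanly, rather than grinding out each $\beta_0^{ij}$ in isolation. A secondary, purely bookkeeping difficulty is to reconcile the sign and scaling conventions of \cite{CF} for $W_i$, which I would settle once and for all through the normalization $W_i(N_0)=T_i$ fixed above.
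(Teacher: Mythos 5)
Your overall strategy --- pin down the matrix by evaluating $W_i$ on nodes with known images --- is the same as the paper's, but you use a different, smaller set of incidence conditions: only the six nodes $N_0,N_{ij}$ lying on the trope $T_i$, whereas the paper, after extracting the last column and row essentially as you do, determines $a_{12},a_{13},a_{23}$ from the images $W_1(N_{jk})=T_{1jk}$ of the ten nodes \emph{not} on $T_1$, finishing with Vi\`ete's formulae. Your smaller data set can only suffice because of antisymmetry (the six nodes you use are coplanar, so their images do not determine a general projective map), and here lies the first gap: your justification of antisymmetry is circular within this paper. Proposition \ref{epsilon} rests on Lemma \ref{polar}, whose proof starts from the matrix of Lemma \ref{Wi} and \emph{derives} the incidence $\xi\in W_i(\xi)$ from antisymmetry --- exactly the implication you want to run backwards. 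So antisymmetry must be imported from \cite{CF}, Chapter $4$, Section $5$ (or from Hudson), not from the later results of this paper; the paper does the same implicitly when it closes with ``using Vi\`ete's formulae and antisymmetry''.

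The second gap is a miscount that would make your plan fail as written. The condition $A_iN_{ij}\parallel(\theta_j^2:-\theta_j:1:0)$ gives only \emph{one} independent linear condition on $(a_{12},a_{13},a_{23})$ per $j$, not two: the fourth coordinates vanish identically on both sides, and the two remaining $2\times 2$ proportionality minors both reduce (using $N_{ij}=(1:\theta_i+\theta_j:\theta_i\theta_j:\beta_0(i,j))$ and the already-known last row and column) to the single equation
\begin{equation*}
a_{12}+a_{13}\theta_j+a_{23}\theta_j^2=(\theta_j-\theta_i)\,\beta_0(i,j).
\end{equation*}
With two choices of $j$ you are left with a one-parameter family of solutions; you need three, after which the Vandermonde system is uniquely solvable. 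The silver lining is that this dissolves your worry about the ``genuinely messy'' overdetermined system: since $F_0(\theta_i,t)$ vanishes at $t=\theta_i$ (because $F(\theta_i)=0$), the right-hand side $-F_0(\theta_i,t)/(\theta_i-t)$ is \emph{exactly} a quadratic polynomial in $t$, whose coefficients, after one further use of $F(\theta_i)=0$ to eliminate $f_0$, are precisely the stated $a_{23}=\theta_i^2(f_5+2\theta_i)$, $a_{13}$ and $a_{12}=-f_1-2f_0/\theta_i$; consistency for all five $j\ne i$ is then automatic. With these two repairs your argument goes through and is, if anything, tidier than the paper's.
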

\noindent {\sl Note.} \,Each time the vector of coordinates of a
point in $\P^3$ is involved in matrix or scalar product
computations, we view it as a column vector.

\begin{proof}
Suppose we want the matrix corresponding to~$W_1$. Recall
from~\cite{CF}, Chapter~$3$ what the nodes~$N_{ij}$ are :
$$
N_{ij}=\left(1:\theta_i+\theta_j:\theta_i\theta_j:\beta_0(i,j)\right),
$$
where
$$
\beta_0(i,j)=-\prod_{m\neq
i,j}\theta_m-\theta_i\theta_j(\theta_i\theta_j+\sum_{s\neq
t}\theta_s\theta_t)
$$
and in the last sum~$s,t\notin \{i,j\}$.

On writing that~$W_1(N_0)=T_1=(\theta_1^2:-\theta_1:1:0)$ one
finds: $a_{14}=\theta_1^2$, $a_{24}=-\theta_1$, $a_{34}=1$ and
$a_{44}=0$. Now looking at the last coordinate of the
equality~$W_1(N_{1i})=T_i$ one gets a relation:
$$
a_{41}+a_{42}(\theta_1+\theta_i)+a_{43}\theta_1\theta_i+a_{44}\beta_0(1,i)=0,
$$
with a simple solution~$a_{41}=-\theta_1^2$, $a_{42}=\theta_1$,
$a_{43}=-1$ and $a_{44}=0$. One considers then the
equality~$W_1(N_{ij})=T_{1ij}$, where the trope~$T_{1ij}$ has
coordinates
$$
T_{1ij} =
\left(\begin{array}{clcr}(\theta_1+\theta_i+\theta_j)\theta_k\theta_l\theta_m+\theta_1\theta_i\theta_j(\theta_k+\theta_l+\theta_m):&
    -\theta_1\theta_i\theta_j-\theta_k\theta_l\theta_m : \cr
     \theta_1\theta_i+\theta_1\theta_j+\theta_i\theta_j+\theta_k\theta_l+\theta_k\theta_m+\theta_l\theta_m :&
     1\end{array}\right),
$$
where~$k,l,m$ are the indices from~$1$ to~$6$ different
from~$1,i,j$. This yields
$$
a_{41}+a_{42}(\theta_i+\theta_j)+a_{43}\theta_i\theta_j+a_{44}\beta_0(i,j)=1
$$
projectively. The value of the last expression is
$$
v=-\theta_1^2+\theta_1(\theta_i+\theta_j)-\theta_i\theta_j
$$
and this corresponds projectively to~$1$, so the coordinates we
want to find are those of  the trope~$T_{1ij}$, each multiplied
by~$v$. One finishes the computations using Vi\`ete's formulae and
antisymmetry.
\end{proof}


\begin{lemma}\label{polar}
For any point~$\xi\in\P^3$ the plane with dual
coordinates~$W_i(\xi)$ is the polar plane of~$\xi$ with respect to
$\Ll_i$.
\end{lemma}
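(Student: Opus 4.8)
The plan is to prove the equality by computing, on each side, the antisymmetric $4\times 4$ matrix that defines the null polarity in question, and then checking that the two matrices are proportional. Since Lemma~\ref{Wi} already produces one of them, namely the matrix $A_i$ of $W_i$, the whole argument reduces to extracting the matrix of $\Ll_i$ and comparing. I would begin by recalling the standard dictionary between a linear complex and its polarity: if the complex is written $\sum_{j<k} a_{jk}p_{jk}=0$ in the Plücker coordinates $p_{jk}=u_jv_k-u_kv_j$, then a point $w$ lies in the polar plane of $\xi$ precisely when the line $\langle\xi,w\rangle$ belongs to the complex, i.e. when $\sum_{j<k}a_{jk}(\xi_jw_k-\xi_kw_j)=0$. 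Extending $(a_{jk})$ to an antisymmetric matrix $A$, this condition is $\xi^\top A\,w=0$, so the polar plane of $\xi$ with respect to the complex has dual coordinates $A\xi$. Thus it suffices to compute the antisymmetric matrix $A^{(i)}$ attached to $\Ll_i$ and to compare it with $A_i$.

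Next I would make $\Ll_i$ explicit. By definition $\Ll_i:\zeta_i=0$, and by (\ref{XZE}) this is, up to the scalar $\sqrt{\omega_i}$, the linear form $G(\overrightarrow{X},\overrightarrow{v}(\theta_i))=0$ in the Plücker coordinates $\overrightarrow{X}=(X_1:\dots:X_6)$. The vector $\overrightarrow{v}(X)$ can be read off from the formulae for $\Theta$: since $G(\overrightarrow{X},\overrightarrow{v}(X))=P(X)=\sum_j p_j(\overrightarrow{X})X^j$ and the polar form of $G=2X_1X_4+2X_2X_5+2X_3X_6$ pairs $X_1\leftrightarrow X_4$, $X_2\leftrightarrow X_5$, $X_3\leftrightarrow X_6$, matching coefficients gives $\overrightarrow{v}(X)$ and hence $\overrightarrow{v}(\theta_i)$ explicitly in terms of $\theta_i$ and the $f_j$. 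Substituting turns $\Ll_i$ into a linear form $\sum_k c_kX_k$ with $c_1=1$, $c_2=\theta_i$, $c_3=\theta_i^2$, and $c_4,c_5,c_6$ the successive ``tails'' of $F$ evaluated at $\theta_i$; in particular $c_4=f_1+2f_2\theta_i+2f_3\theta_i^2+2f_4\theta_i^3+2f_5\theta_i^4+2\theta_i^5$.

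I would then translate this linear form into $A^{(i)}$ using $X_1=p_{43}$, $X_2=p_{24}$, $X_3=p_{41}$, $X_4=p_{21}$, $X_5=p_{31}$, $X_6=p_{32}$, which fixes each off-diagonal entry (with its sign) in terms of the $c_k$. Comparing entrywise with the matrix $A_i$ of Lemma~\ref{Wi}, all entries agree up to a single global sign $-1$ at once --- for instance $c_5=\theta_i(f_3+2f_4\theta_i+2f_5\theta_i^2+2\theta_i^3)=a_{13}$ and $c_6=\theta_i^2(f_5+2\theta_i)$ reproduce the $(1,3)$ and $(2,3)$ entries --- with the sole exception of the $(1,2)$ entry, where one must match $c_4$ against $-f_1-2f_0/\theta_i$. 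This is exactly the point where the hypothesis that $\theta_i$ is a root of $F$ enters: using $F(\theta_i)=0$ (with $f_6=1$) to rewrite $f_2\theta_i+f_3\theta_i^2+\dots+\theta_i^5=-f_0/\theta_i-f_1$ collapses $c_4$ into $-f_1-2f_0/\theta_i$. Hence $A^{(i)}=-A_i$ projectively, so $W_i(\xi)$ and the polar plane $A^{(i)}\xi$ coincide for every $\xi\in\P^3$, which is the assertion.

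I expect the main obstacle to be bookkeeping rather than anything conceptual: getting the signs right in the passage from the linear form $\sum_k c_kX_k$ to the antisymmetric matrix, because of the mixed ordering in $\overrightarrow{X}=(p_{43}:p_{24}:p_{41}:p_{21}:p_{31}:p_{32})$, and correctly pinning down $\overrightarrow{v}(\theta_i)$. Once those conventions are fixed, the only genuinely substantive step is the reconciliation of the $(1,2)$ entry via $F(\theta_i)=0$; every other comparison is immediate.
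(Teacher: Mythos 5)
Your argument is correct, and it reaches the same computational core as the paper's proof by a somewhat different route. The paper first checks that the plane $W_1(\xi)$ passes through $\xi$ (antisymmetry of $A_1$), then takes the incidence conditions (\ref{incluse}) for a line to lie in the plane $W_1(\xi)=A_1\xi$, reduces them modulo the conditions (\ref{passer}) for that line to pass through $\xi$, and observes that they collapse to the single equation $v_1X_4+v_4X_1+v_2X_5+v_5X_2+v_3X_6+v_6X_3=0$, i.e.\ $\zeta_1=0$; the identification $a_{12}=v_1$, $a_{13}=v_2$, etc.\ of the entries of $A_1$ with the components of $\overrightarrow{v}(\theta_1)$ is simply asserted there, with a pointer to \cite{CF}, formula $(17.4.3)$. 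You instead package the whole statement as an equality of null polarities: the polar plane of $\xi$ with respect to a linear complex $\sum_{j<k}a_{jk}p_{jk}=0$ is $A\xi$ for the associated antisymmetric matrix $A$, so it suffices to extract the matrix of $\Ll_i$ from $\zeta_i\propto G(\overrightarrow{X},\overrightarrow{v}(\theta_i))$ and compare it with $A_i$. This buys two things: it dispenses with the reduction of (\ref{incluse}) modulo (\ref{passer}), which is the fiddly step of the paper's proof (and it subsumes the preliminary check $\xi\in W_i(\xi)$, since that holds for any null polarity), and it isolates the one genuinely nontrivial reconciliation, namely that the $(1,2)$ entry needs $F(\theta_i)=0$ to turn $f_1+2f_2\theta_i+\cdots+2\theta_i^5$ into $-f_1-2f_0/\theta_i$ --- a point the paper leaves buried in the citation. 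The cost is that you must pin down $\overrightarrow{v}(\theta_i)$ yourself (your reading it off from the formulae for $\Theta$ via $G(\overrightarrow{X},\overrightarrow{v}(X))=P(X)$ is exactly right) and track the signs coming from the ordering $(p_{43}:p_{24}:p_{41}:p_{21}:p_{31}:p_{32})$; as you say, these assemble into one global sign $A^{(i)}=-A_i$, which is projectively harmless.
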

\begin{proof}
To make a choice, put~$i=1$. Put~$W_1(\xi)=w=(w_1:\ldots:w_4)$. We
have
$$
\sum w_i\xi_i= \xi^T w= \xi^T A_1 \xi = 0 \quad \text{since } A_1
\; \text{is antisymmetric},
$$
so the plane with dual coordinates~$w$ passes through~$\xi$.

Now, a line with Grassmann coordinates~$(X_1:\ldots :X_6)$ passes
through a point~$\xi=(\xi_1:\ldots :\xi_4)\in\P^3$ iff the
following relations hold :
\begin{equation}\label{passer}
\left\{ \begin{array}{ll} \xi_1X_6-\xi_2X_5+\xi_3X_4=0 \\
                          \xi_1X_2+\xi_2X_3-\xi_4X_4=0 \\
                          \xi_1X_1-\xi_3X_3+\xi_4X_5=0.
\end{array}\right.
\end{equation}
Similarly, such a line lies in the plane
$$
\Pi :\quad \sum_{i=1}^4 a_i\xi_i=0
$$
in~$\P^3$ iff
\begin{equation}\label{incluse}
\left\{ \begin{array}{ll} a_2X_4+a_3X_5+a_4X_3=0 \\
                          a_1X_4-a_3X_6+a_4X_2=0 \\
                          a_1X_5+a_2X_6-a_4X_1=0.
\end{array}\right.
\end{equation}
Note that the entries in the matrix~$A_1$ corresponding to~$W_1$
are :
$$
a_{12}=v_1,\quad a_{13}=v_2,\quad a_{14}=v_6,\quad
a_{23}=v_3,\quad a_{34}=v_4,\quad a_{42}=v_5,
$$
where the~$v_i$ are those defined in~\cite{CF}, formula ($17.4.3$)
and~$\theta=\theta_1$. The dual coordinates of the
plane~$W_1(\xi)$ are
$$
\left(\begin{array}{clcr} w_1\cr w_2\cr w_3\cr
w_4\end{array}\right)= A_1 \left(\begin{array}{clcr} \xi_1\cr
\xi_2\cr \xi_3\cr \xi_4\end{array}\right)=
\left(\begin{array}{clcr} v_1\xi_2+v_2\xi_3+v_6\xi_4\cr
-v_1\xi_1+v_3\xi_3-v_5\xi_4\cr -v_2\xi_1-v_3\xi_2+v_4\xi_4\cr
-v_6\xi_1+v_5\xi_2-v_4\xi_3\end{array}\right).
$$

Now, considering relation~(\ref{incluse}) with~$a_i=w_i$, the
conditions that a line passing through $\xi$ with Grassmann
coordinates~$(X_1:\ldots :X_6)$ lie in the plane~$W_1(\xi)$ all
reduce to :
$$
v_1X_4+v_4X_1+v_2X_5+v_5X_2+v_3X_6+v_6X_3=0
$$
(one has to take into account also~(\ref{passer})). But, up to a
constant factor this is the value of~$\zeta_1$ under the change of
variables which brings~$G$ and~$H$ from Lemma~\ref{Kummer2} to
diagonal form (cf. (\ref{XZE}) or~\cite{CF}, formula ($17.4.5$)).
Therefore, a line that passes through~$\xi$  is contained
in~$W_1(\xi)$ iff it belongs to~$\Ll_1$ defined by $\zeta_1=0$.
This is exactly what we want.
\end{proof}
\section{Twist of the desingularized Kummer}\label{twist}

We denote by $\SSS=\Jj(k)$ the Mordell-Weil group of the Jacobian.
There is a map defined by Cassels (\cite{Cas}):
$$
\Phi : \SSS \longrightarrow \Ll = L^* / k^*(L^*)^2 \quad
\text{where} \quad L=k[T]/(F(T)) .
$$
The fake Selmer group is defined by Poonen and Schaefer in
\cite{PS}:
$$
\Sel^{(2)}_{\text{fake}} (k,\Jj )= \{ \xi \in \Ll \mid
          \text{res}_\nu (\xi) \in \Phi_\nu (\Jj (k_\nu)), \text{ for all } \nu \in \Omega \},
$$
where~$\Omega$ is the set of places of~$k$, $\Ll_\nu = L^*_\nu /
k^*_\nu (L^*_\nu)^2$, $L_\nu = L \otimes_k k_\nu$ and $\Phi_\nu :
\Jj (k_\nu) \longrightarrow \Ll_\nu$. The restriction map
$\text{res}_\nu : \Ll \longrightarrow \Ll_\nu$ is induced by~$k
\hookrightarrow k_\nu$.

We present now an idea of M. Stoll and N. Bruin. Let~$\xi \in
\Sel^{(2)}_{\text{fake}} (k,\Jj )$; one looks for an element
$$
[D] = \{ (x,y) , (u,v) \} \in \SSS
$$
such that~$\Phi([D])=\xi$.
Denote by~$\xi(X)$ a representative polynomial of degree~$5$ of
$\xi$.

The formula~$\Phi([D])=\xi$ can now be written as:
$$
n \, (x-X)(u-X) \equiv \xi(X) \, P(X)^2  \mod F(X)  ,
$$
where~$P(X)$ is the polynomial of degree~$5$ to be found and~$n
\in k^*$.

Define $\Ss^\xi \subset \P^5$ as the projective locus of
polynomials $P(X)$ of degree $5$ such that $\xi(X) P(X)^2 \equiv
\text{quadratic} \mod F(X)$. The surface $\Ss^\xi$ is given by the
three quadratic forms in the coefficients of~$P(X)$:
$$
S^\xi \, : \qquad C^\xi_5 = C^\xi_4 = C^\xi_3 = 0,
$$
where
$$
C^\xi (X) = C^\xi_5 X^5 + C^\xi_4 X^4 + C^\xi_3 X^3 + C^\xi_2 X^2
+ C^\xi_1 X + C^\xi_0
$$
is the polynomial such that
\begin{equation}\label{CXi}
 C^\xi (X) \equiv \xi(X) \, P(X)^2
\mod F(X)  .
\end{equation}
To a rational point on $\Ss^\xi$ corresponds a quadratic rational
polynomial which is a candidate for being of the form $\Phi([D])$
for $[D] \in \SSS$.

If we take~$\xi=1$, we obtain the desingularized Kummer~$\Ss$.

One can interpret this construction as giving a twist of $\Ss$ in
the following way. If $\beta(X) \in \kbar[X]$ is a polynomial of
degree $5$ such that
$$
\beta(X)^2 \equiv \xi(X) \mod F(X) ,
$$
the twist is given by the isomorphism $\alpha :\Ss^\xi
\longrightarrow \Ss$, where
$$
P(X) \longmapsto \alpha(P(X)) \equiv \beta(X) P(X) \mod F(X) .
$$

The twist $\Ss^\xi$ can be diagonalized like $\Ss$. We keep the
notations $P_j(X)$, $\omega_j$ and $\pi_j$ from Section~$3$.
Putting
$$
\xi_j = \xi (\theta_j) ,
$$
we also have
$$
\xi(X)= \sum_{j=1}^6 \frac{\xi_j}{\omega_j} P_j(X) .
$$
Taking into account that
\begin{equation}\label{anulaPiPj}
\begin{aligned}
P_j(X)^2         & \equiv \omega_j P_j(X) \mod F(X) , \\
P_i(X) \, P_j(X) & \equiv 0    \mod F(X) \quad \text{for } i \neq
j ,
\end{aligned}
\end{equation}
finally gives
$$
\xi(X) \, P(X) ^2 \equiv \sum_{j=1}^6 \xi_j\omega_j \pi_j^2 P_j(X)
\mod F(X) .
$$
Since
$$
\begin{aligned}
P_j(X) & = F(X) / \left(  f_6 (X - \theta_j \right) \\
       & = X^5 + (\theta_j + (f_5/f_6) ) X^4
           + (\theta_j^2 + (f_5/f_6) \theta_j + (f_4/f_6)) X^3 +
           \cdots ,
\end{aligned}
$$
the surface $S^\xi$ is obtained in the variables~$\pi_j$ as the
intersection of the three quadrics $S_i^\xi=0$ ($i=0,\, 1 ,\, 2$),
where
$$
\begin{matrix}
S_0^\xi  & = & C_5^\xi & = & \sum_j \xi_j\omega_j \pi_j^2 , \\
S_1^\xi  & = & f_6 C_4^\xi - f_5 C_5^\xi
                       & = & f_6 \sum_j \theta_j \xi_j\omega_j \pi_j^2 , \\
S_2^\xi  & = & f_6^2 C_3^\xi - f_5 f_6 C_4^\xi
                       + (f_5^2 - f_4 f_6) C_5^\xi
                       & = & f_6^2 \sum_j \theta_j^2 \xi_j\omega_j \pi_j^2 .
\end{matrix}
$$
Note that if~$\xi=1$ then~$\xi_j=1$ for all~$j$.

\section{Linear automorphisms of $\Ss$}

Keeping Notation \ref{Delta}, we let
\begin{equation}\label{interlines}
\begin{aligned}
\p_i     & = \Delta_0 \cap \Delta_i ,\\
\p_{ij}  & = \Delta_i \cap \Delta_{ij}=\varepsilon^{(i)}(\p_j),\\
\p_{ijk} & = \Delta_{ij} \cap \Delta_{ijk} =
\varepsilon^{(i)}(\p_{jk}).
\end{aligned}
\end{equation}
\begin{remark}\label{griffiths}
Since there are no other lines on $\Ss$ (see \cite{GH}, page
$775$), this is the whole structure of line intersections on
$\Ss$.
\end{remark}
Let $\GL (\Ss)$ be the group of linear
automorphisms of $\Ss$.
\begin{lemma}\label{unique}
Let $A,B \in \GL(\Ss)$ such that
$A_{\mid_{\Delta_0}}=B_{\mid_{\Delta_0}}$. Then $A=B$.
\end{lemma}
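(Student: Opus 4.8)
The plan is to exploit the rigidity coming from the line structure on $\Ss$ described in Remark \ref{griffiths}. The key idea is that $\Ss$ is a K3 surface containing exactly $32$ lines, with the prescribed intersection pattern. A linear automorphism $A \in \GL(\Ss)$ must permute these $32$ lines, since it is an automorphism of $\Ss$ as a projective variety and lines are mapped to lines by linear maps. So the natural strategy is to show that the action of $A$ on $\Delta_0$ determines its action on \emph{every} line $\Delta_i$, $\Delta_{ij}$, $\Delta_{ijk}$, and then argue that the induced action on the whole configuration forces $A$ to be determined on enough of $\Ss$ (in fact on a set of points that spans $\P^5$) to conclude $A = B$.

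Concretely, set $C = B^{-1} A$; then $C \in \GL(\Ss)$ and $C$ fixes $\Delta_0$ pointwise, and the goal becomes showing $C = \id$. First I would observe that since $C$ is linear and fixes the line $\Delta_0$ pointwise, it must permute the remaining $31$ lines, and it must preserve the incidence relations recorded in (\ref{interlines}): the six lines $\Delta_i$ meeting $\Delta_0$ (at the points $\p_i = \Delta_0 \cap \Delta_i$) must be permuted among themselves. But each $\Delta_i$ meets $\Delta_0$ in the specific point $\p_i = (-\theta_i : 1 : 0 : 0 : 0 : 0)$ computed in the proof of Lemma \ref{structure}, and $C$ fixes each such point because it fixes $\Delta_0$ pointwise; hence $C$ fixes each $\Delta_i$ setwise (a line through a fixed point, mapped to a line through the same fixed point among the six candidates, and distinguished by which $\p_i$ it contains). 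Propagating outward through the intersection pattern $\p_{ij} = \Delta_i \cap \Delta_{ij}$ and $\p_{ijk} = \Delta_{ij} \cap \Delta_{ijk}$, I would show that $C$ stabilizes every line in the configuration.

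Next I would upgrade ``stabilizes each line'' to ``fixes each line pointwise.'' Since $C$ fixes each $\Delta_i$ setwise and already fixes the point $\p_i \in \Delta_i$, to pin down $C$ on $\Delta_i$ it suffices to find a second fixed point on $\Delta_i$; this is provided by $\p_{ij} \in \Delta_i$, which $C$ fixes once we know $C$ fixes $\Delta_{ij}$ — so the fixed-point information cascades through the incidence graph. A projective linear map of a line $\P^1$ that fixes two distinct points and stabilizes the line need not be the identity, so I would need a third fixed point (or a multiplicativity/cross-ratio argument) on each line; the configuration (\ref{interlines}) supplies several distinguished intersection points on each $\Delta_{\bullet}$, which should suffice. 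Once $C$ fixes every one of the $32$ lines pointwise, it fixes all the intersection points $\p_i, \p_{ij}, \p_{ijk}$, and I would check that these points (together with points on $\Delta_0$) span $\P^5$; a linear map fixing a spanning set of points is the identity in $\PGL_6$, giving $C = \id$ and hence $A = B$.

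\textbf{The main obstacle} will be the pointwise-fixing step: going from ``$C$ stabilizes each line'' to ``$C$ acts as the identity on each line.'' Stabilizing $\Delta_0$ pointwise is the only pointwise hypothesis given, and one must carefully verify that the cascade of distinguished intersection points supplies at least three fixed points on each line (or equivalently enough incidence constraints to kill the one remaining scalar degree of freedom on each $\P^1$). One also has to confirm that $C$ cannot permute lines nontrivially; this relies essentially on the asymmetry introduced by fixing $\Delta_0$ pointwise, which breaks the symmetry of the otherwise highly symmetric $32$-line configuration and singles out $\Delta_0$ and its six neighbours $\Delta_i$ from the rest. The final spanning check — that the fixed points span $\P^5$ — is a routine linear-algebra verification using the explicit coordinates of the $\p_i$ and their images under the $\varepsilon^{(i)}$.
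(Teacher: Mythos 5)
Your proposal is correct and follows essentially the same route as the paper: reduce to an automorphism fixing $\Delta_0$ pointwise, propagate invariance of the lines through the incidence structure of Remark \ref{griffiths} and (\ref{interlines}) via uniqueness of lines with prescribed incidences, collect enough fixed intersection points on each line to force the identity there, and conclude globally. The paper's proof is a terser version of exactly this cascade; in particular the three-fixed-points issue you flag is settled there by the six fixed points $\p_i$, $\p_{ij}$ ($j\neq i$) lying on each $\Delta_i$.
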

\begin{proof}
Let $I \in \GL(\Ss)$ be the identity. If $A \in \GL(\Ss)$ and
$A_{\mid_{\Delta_0}}=I_{\mid_{\Delta_0}}$, then $A$ fixes the
$\p_i$, so invaries the $\Delta_i$. But then $A$ invaries also
$\Delta_{ij}$, the unique line other than $\Delta_0$ which meets
$\Delta_i$ and $\Delta_j$, so $A$ fixes $\p_{ij}$, $j=1,\ldots 6$.
Hence $A_{\mid_{\Delta_i}}=I_{\mid_{\Delta_i}}$. Similarly, one
sees that $A$ is the identity on any of the $32$ lines on $\Ss$,
so $A=I$.
\end{proof}

Let $A \in \GL(\Ss)$. Since $A(\Delta_0)$ is a line, by Remark
\ref{griffiths} there exists a unique involution $\varepsilon \in
\Inv (\Ss)$ such that $\varepsilon \circ A(\Delta_0)=\Delta_0$. We
associate to $A$ the permutation $\sigma \in S_6$ such that
\begin{equation}\label{permutation}
\varepsilon \circ A (\p_i)=\p_{\sigma(i)} \quad\text{for}
\,i=1,\ldots 6.
\end{equation}
Note that $\sigma =\text{id}$ iff $\varepsilon \circ
A_{\mid_{\Delta_0}} = I_{\mid_{\Delta_0}}$ iff $\varepsilon \circ
A = I$ (by Lemma \ref{unique}) iff $A \in \Inv(\Ss)$.

\begin{definition}
$\GL_0(\Ss)$ is the subgroup of $\GL(\Ss)$ of linear automorphisms
$A$ such that $A(\Delta_0)=\Delta_0$.
\end{definition}

\begin{lemma}\label{lemmapermutation}
Let $A \in \GL(\Ss)$ and $\sigma \in S_6$ be the permutation
associated to $A$ by (\ref{permutation}). Then, for any $1\le i
\le 6$ we have:
\begin{equation}\label{eqpermutation}
A \circ \varepsilon^{(i)} = \varepsilon^{(\sigma(i))} \circ A .
\end{equation}
\end{lemma}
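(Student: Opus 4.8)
The plan is to reduce to the case where $A$ fixes $\Delta_0$, and then to pin down the conjugate $A\varepsilon^{(i)}A^{-1}$ by showing it agrees with $\varepsilon^{(\sigma(i))}$ on the whole line $\Delta_0$, invoking the rigidity of Lemma \ref{unique}.

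First I would reduce to $\GL_0(\Ss)$. Write $B=\varepsilon\circ A$, where $\varepsilon\in\Inv(\Ss)$ is the involution with $\varepsilon\circ A(\Delta_0)=\Delta_0$; then $B\in\GL_0(\Ss)$ and $B(\p_i)=\p_{\sigma(i)}$ by the very definition (\ref{permutation}) of $\sigma$. Since $\Inv(\Ss)$ is abelian, $\varepsilon$ commutes with $\varepsilon^{(\sigma(i))}$, so for $A=\varepsilon B$ the desired identity $A\varepsilon^{(i)}=\varepsilon^{(\sigma(i))}A$ is equivalent to $B\varepsilon^{(i)}=\varepsilon^{(\sigma(i))}B$. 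Thus it suffices to treat $B\in\GL_0(\Ss)$.

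Next I would read off the action of $B$ on lines from the incidence combinatorics in (\ref{interlines}) and the proof of Lemma \ref{structure}. As a linear automorphism, $B$ permutes the $32$ lines of Remark \ref{griffiths} preserving intersections, and $B(\Delta_0)=\Delta_0$. The six lines meeting $\Delta_0$ are exactly $\Delta_1,\ldots,\Delta_6$ (the proof of Lemma \ref{structure} shows $\Delta_0$ is disjoint from every $\Delta_{ij}$ and $\Delta_{ijk}$), so $B$ permutes them; computing the intersection with $\Delta_0$ gives $\Delta_0\cap B(\Delta_k)=B(\p_k)=\p_{\sigma(k)}$, whence $B(\Delta_i)=\Delta_{\sigma(i)}$ because the points $\p_m=\Delta_0\cap\Delta_m$ are distinct. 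A short incidence check shows $\Delta_{ij}$ is the only line other than $\Delta_0$ meeting both $\Delta_i$ and $\Delta_j$; applying $B$ and using $B(\Delta_0)=\Delta_0$ then gives $B(\Delta_{ij})=\Delta_{\sigma(i)\sigma(j)}$.

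The heart of the argument is to verify that $B\varepsilon^{(i)}B^{-1}$ and $\varepsilon^{(\sigma(i))}$ agree at each of the six points $\p_1,\ldots,\p_6\in\Delta_0$. I would use the relations $\p_{kl}=\varepsilon^{(k)}(\p_l)$ from (\ref{interlines}) together with the fact (from the proof of Lemma \ref{structure}, where $\p_i$ is represented by $X-\theta_i$) that $\varepsilon^{(i)}$ fixes $\p_i$. For $j=\sigma(i)$ both maps fix $\p_{\sigma(i)}$: indeed $B^{-1}(\p_{\sigma(i)})=\p_i$ is fixed by $\varepsilon^{(i)}$ and $B(\p_i)=\p_{\sigma(i)}$, while $\varepsilon^{(\sigma(i))}(\p_{\sigma(i)})=\p_{\sigma(i)}$. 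For $j\neq\sigma(i)$ one computes $B\varepsilon^{(i)}B^{-1}(\p_j)=B(\varepsilon^{(i)}(\p_{\sigma^{-1}(j)}))=B(\p_{i,\sigma^{-1}(j)})=\p_{\sigma(i),j}=\varepsilon^{(\sigma(i))}(\p_j)$, where the middle equality uses $\p_{i,\sigma^{-1}(j)}=\Delta_i\cap\Delta_{i,\sigma^{-1}(j)}$ and the line images found above.

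Finally, both $B\varepsilon^{(i)}B^{-1}$ and $\varepsilon^{(\sigma(i))}$ lie in $\GL(\Ss)$ and send $\Delta_0$ isomorphically onto $\Delta_{\sigma(i)}$, so their restrictions to $\Delta_0$ are projective isomorphisms of $\P^1$; agreeing at the three (in fact six) distinct points $\p_j$ forces them to coincide on all of $\Delta_0$. Lemma \ref{unique} then yields $B\varepsilon^{(i)}B^{-1}=\varepsilon^{(\sigma(i))}$, and the reduction of the first paragraph gives (\ref{eqpermutation}). The main obstacle is the bookkeeping of the second step: transporting the index combinatorics of the line configuration correctly through $B$ so that the point computations close up, in particular matching the ``diagonal'' case $j=\sigma(i)$ via the fixed points of $\varepsilon^{(i)}$.
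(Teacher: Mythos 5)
Your proof is correct and follows essentially the same route as the paper: pass to $B=\varepsilon\circ A\in\GL_0(\Ss)$, track the images of the lines $\Delta_i$ and $\Delta_{ij}$ through the incidence combinatorics to get $B(\p_{ij})=\p_{\sigma(i)\sigma(j)}$, check that the conjugate of $\varepsilon^{(i)}$ by $B$ agrees with $\varepsilon^{(\sigma(i))}$ on the six points $\p_j$, and conclude by Lemma \ref{unique} together with commutativity of $\Inv(\Ss)$. Your version merely makes explicit two points the paper leaves implicit (the diagonal case $j=\sigma(i)$ via the fixed point of $\varepsilon^{(i)}$, and the passage from agreement at the six points to agreement on all of $\Delta_0$), which is a welcome but not substantively different elaboration.
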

\begin{proof}
Let $B=\varepsilon \circ A$. Then \,$B(\Delta_0)=\Delta_0$ and
\,$B(\p_i)=\p_{\sigma(i)}$, so $B(\Delta_i)=\Delta_{\sigma(i)}$.
The unique line cutting $\Delta_{\sigma(i)}$ and
$\Delta_{\sigma(j)}$ is $\Delta_{\sigma(i)\sigma(j)}$ hence,
$B(\Delta_{ij})=\Delta_{\sigma(i)\sigma(j)}$. Then
$$
B(\p_{ij})=B(\Delta_i\cap\Delta_{ij})=B(\Delta_i)\cap
B(\Delta_{ij})=\Delta_{\sigma(i)}\cap
\Delta_{\sigma(i)\sigma(j)}=\p_{\sigma(i)\sigma(j)}.
$$
Now one sees that $(\varepsilon \circ A)^{-1} \circ
\varepsilon^{(\sigma(i))} \circ (\varepsilon \circ A)$ acts like
$\varepsilon^{(i)}$ on $\p_{j}$. By Lemma \ref{unique} and knowing
that $\Inv (\Ss)$ is commutative, we conclude $A \circ
\varepsilon^{(i)} = \varepsilon^{(\sigma(i))} \circ A$.
\end{proof}
\begin{proposition}\label{exacte}
Let $\psi :\GL(\Ss) \longrightarrow \GL_0(\Ss)$ be the map
$A\mapsto \varepsilon\circ A$ defined by formula
(\ref{permutation}). We have an exact sequence of groups
$$
1 \longrightarrow \Inv(\Ss) \longrightarrow \GL(\Ss)
\mathop{\longrightarrow}^{\psi} \GL_0(\Ss) \longrightarrow 1 .
$$
\end{proposition}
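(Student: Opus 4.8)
The plan is to derive all three exactness conditions from a single structural fact about the lines on $\Ss$. By Remark~\ref{griffiths} the surface carries exactly $32$ lines, and by Notation~\ref{Delta} each of them equals $\varepsilon(\Delta_0)$ for some $\varepsilon\in\Inv(\Ss)$. Since $\Inv(\Ss)$ has order $32$ and its orbit of $\Delta_0$ is the whole set of $32$ lines, orbit--stabiliser forces the stabiliser of $\Delta_0$ in $\Inv(\Ss)$ to be trivial; that is, $\Inv(\Ss)$ acts \emph{simply transitively} on the lines of $\Ss$. First I would use this to see that $\psi$ is well defined and lands in $\GL_0(\Ss)$: for $A\in\GL(\Ss)$ the image $A(\Delta_0)$ is one of the $32$ lines, so there is a \emph{unique} $\varepsilon=\varepsilon_A\in\Inv(\Ss)$ with $\varepsilon_A A(\Delta_0)=\Delta_0$, and then $\psi(A)=\varepsilon_A A\in\GL(\Ss)$ fixes $\Delta_0$, i.e. $\psi(A)\in\GL_0(\Ss)$.

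The heart of the argument is that $\psi$ is a group homomorphism, and this is where I expect the only real subtlety. The naive guess $\varepsilon_{AB}=\varepsilon_A\varepsilon_B$ fails because $A$ does not commute with $\Inv(\Ss)$; instead Lemma~\ref{lemmapermutation} gives $A\,\varepsilon^{(i)}=\varepsilon^{(\sigma(i))}A$, where $\sigma=\sigma_A\in\mathfrak S_6$ is the permutation attached to $A$. Writing $\varepsilon_B=\prod_{i\in I}\varepsilon^{(i)}$ and conjugating term by term, one obtains $A\,\varepsilon_B A^{-1}=\prod_{i\in I}\varepsilon^{(\sigma_A(i))}=:{}^{\sigma_A}\!\varepsilon_B\in\Inv(\Ss)$, whence
\[
\psi(A)\,\psi(B)=\varepsilon_A A\,\varepsilon_B B=\varepsilon_A\,\bigl({}^{\sigma_A}\!\varepsilon_B\bigr)\,AB .
\]
It then remains to identify $\varepsilon_A\,{}^{\sigma_A}\!\varepsilon_B$ with $\varepsilon_{AB}$. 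Evaluating on $\Delta_0$ and using ${}^{\sigma_A}\!\varepsilon_B\,A=A\,\varepsilon_B$ together with $\varepsilon_B B(\Delta_0)=\Delta_0$ and $\varepsilon_A A(\Delta_0)=\Delta_0$ gives $\bigl(\varepsilon_A\,{}^{\sigma_A}\!\varepsilon_B\bigr)(AB)(\Delta_0)=\Delta_0$. By uniqueness of the element of $\Inv(\Ss)$ carrying $(AB)(\Delta_0)$ back to $\Delta_0$, this forces $\varepsilon_{AB}=\varepsilon_A\,{}^{\sigma_A}\!\varepsilon_B$, so that $\psi(AB)=\psi(A)\,\psi(B)$.

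Finally I would settle exactness. For the kernel: $\psi(A)=I$ means $\varepsilon_A A=I$, i.e. $A=\varepsilon_A^{-1}\in\Inv(\Ss)$; conversely each $A\in\Inv(\Ss)$ has $\varepsilon_A=A^{-1}$ and hence $\psi(A)=I$. Thus $\ker\psi=\Inv(\Ss)$, which is exactness at $\GL(\Ss)$, the left map $\Inv(\Ss)\hookrightarrow\GL(\Ss)$ being the inclusion. For surjectivity: if $A\in\GL_0(\Ss)$ then $A(\Delta_0)=\Delta_0$, and since the stabiliser of $\Delta_0$ in $\Inv(\Ss)$ is trivial the unique admissible $\varepsilon$ is $\id$, so $\psi(A)=A$. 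Hence $\psi$ restricts to the identity on $\GL_0(\Ss)$; in particular it is surjective, and in fact the sequence splits, exhibiting $\GL(\Ss)$ as a semidirect product $\Inv(\Ss)\rtimes\GL_0(\Ss)$. The one step demanding genuine care is the homomorphism property, precisely because of the permutation twist recorded in Lemma~\ref{lemmapermutation}; everything else is a direct consequence of simple transitivity.
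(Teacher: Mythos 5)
Your proof is correct. Note first that the paper offers no proof of Proposition~\ref{exacte} at all: the authors fold the existence and uniqueness of $\varepsilon$ into the paragraph defining (\ref{permutation}) (``there exists a unique involution $\varepsilon\in\Inv(\Ss)$ such that $\varepsilon\circ A(\Delta_0)=\Delta_0$''), record the kernel identification in the remark ``$\sigma=\mathrm{id}$ iff $\varepsilon\circ A=I$ iff $A\in\Inv(\Ss)$'', and then state the exact sequence without further comment. Your write-up supplies exactly what is missing. The orbit--stabiliser observation that $\Inv(\Ss)$ acts simply transitively on the $32$ lines is a clean way to package the uniqueness the paper asserts from Remark~\ref{griffiths} together with the earlier claim that the involutions produce $31$ further lines from $\Delta_0$. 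Most importantly, you correctly isolate the one genuinely non-obvious point, namely that $\psi$ is a homomorphism even though $\varepsilon_{AB}\neq\varepsilon_A\varepsilon_B$ in general; your computation $\psi(A)\psi(B)=\varepsilon_A\bigl({}^{\sigma_A}\varepsilon_B\bigr)AB$ via Lemma~\ref{lemmapermutation}, followed by the uniqueness argument identifying $\varepsilon_A\,{}^{\sigma_A}\varepsilon_B$ with $\varepsilon_{AB}$, is exactly the verification the paper omits. The observations that $\psi$ restricts to the identity on $\GL_0(\Ss)$ and that the sequence therefore splits as $\GL(\Ss)\cong\Inv(\Ss)\rtimes\GL_0(\Ss)$ are correct bonuses consistent with how the authors use the result afterwards. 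The only implicit input you should be aware of relying on is that the $32$ images $\varepsilon(\Delta_0)$ are pairwise distinct, which the paper asserts (and which follows from Lemma~\ref{structure}); given that, your argument is complete.
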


Proposition \ref{exacte} implies that $\Inv(\Ss)$ is a normal
subgroup of $\GL(\Ss)$, being the kernel of~$\psi$.

\begin{corollary}
For any linear automorphism $A$ of $\Ss$ not in $\Inv (\Ss)$, the
centralizer of $A$ in $\Inv(\Ss)$ is not equal to $\Inv(\Ss)$.
\end{corollary}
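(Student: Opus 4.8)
The plan is to prove the contrapositive in the sharpest possible form: rather than merely producing some element of $\Inv(\Ss)$ that is not centralized by $A$, I will show that one of the generators $\varepsilon^{(i)}$ already fails to commute with $A$. The essential input is Lemma \ref{lemmapermutation}. Rewriting its conclusion $A\circ\varepsilon^{(i)} = \varepsilon^{(\sigma(i))}\circ A$ as a conjugation relation gives
$$
A\circ\varepsilon^{(i)}\circ A^{-1} = \varepsilon^{(\sigma(i))},
$$
where $\sigma\in S_6$ is the permutation associated to $A$ by (\ref{permutation}). Thus the conjugation action of $A$ on the generators of $\Inv(\Ss)$ is governed entirely by $\sigma$.

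First I would invoke the note following (\ref{permutation}), namely that $\sigma=\mathrm{id}$ if and only if $A\in\Inv(\Ss)$. Since the hypothesis is that $A\notin\Inv(\Ss)$, the associated permutation $\sigma$ is not the identity, so there is an index $i_0$ with $\sigma(i_0)\neq i_0$.

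Next I would record the elementary fact that the generators are pairwise distinct as automorphisms of $\Ss$: by (\ref{piji}), $\varepsilon^{(i)}$ multiplies $\pi_i$ by $-1$ and fixes the remaining five coordinates $\pi_k$, so for $i\neq j$ the diagonal maps $\varepsilon^{(i)}$ and $\varepsilon^{(j)}$ cannot differ by a global scalar (there is always an index $k\notin\{i,j\}$), whence $\varepsilon^{(i)}=\varepsilon^{(j)}$ forces $i=j$. Applying the conjugation formula at the index $i_0$ then yields
$$
A\circ\varepsilon^{(i_0)}\circ A^{-1} = \varepsilon^{(\sigma(i_0))}\neq \varepsilon^{(i_0)},
$$
which is to say $A\circ\varepsilon^{(i_0)}\neq \varepsilon^{(i_0)}\circ A$. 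Hence $\varepsilon^{(i_0)}$ does not lie in the centralizer of $A$ inside $\Inv(\Ss)$, so that centralizer is a proper subgroup of $\Inv(\Ss)$, as claimed.

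I do not expect a genuine obstacle here: the statement collapses to the conjugation relation of Lemma \ref{lemmapermutation} combined with the equivalence $\sigma=\mathrm{id}\Leftrightarrow A\in\Inv(\Ss)$. The only step needing a line of justification is that distinct generators stay distinct as projective automorphisms, which is the short coordinate remark indicated above. (One could equally phrase everything in terms of subsets $I\subseteq\{1,\dots,6\}$ via $A\,\varepsilon_I\,A^{-1}=\varepsilon_{\sigma(I)}$, but testing against the singletons $\varepsilon^{(i)}$ already suffices and gives the cleanest argument.)
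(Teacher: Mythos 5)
Your proof is correct and follows exactly the route the paper intends (the corollary is stated without proof there, immediately after Proposition \ref{exacte}): combine Lemma \ref{lemmapermutation} with the observation that $\sigma=\mathrm{id}$ iff $A\in\Inv(\Ss)$, and note that the $\varepsilon^{(i)}$ are pairwise distinct. Your coordinate check of that last point via (\ref{piji}) is fine; one could equally note $\varepsilon^{(i)}(\Delta_0)=\Delta_i\neq\Delta_j=\varepsilon^{(j)}(\Delta_0)$.
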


We now show that $\GL_0(\Ss)$ is in bijection with the group of
linear automorphisms of $\Delta_0$ which invary  the set $\{p_i,
\,i=1,\ldots 6\} $.

\begin{proposition}\label{prolonge}
Let $\sigma \in S_6$ and $B:\Delta_0 \longrightarrow \Delta_0$ a
linear automorphism of $\Delta_0$ such that for $1 \le i \le 6$,
we have $B(\p_i)=\p_{\sigma(i)}$. Then there exists a unique $A
\in \GL_0(\Ss)$ such that $A_{\mid_{\Delta_0}}=B$.
\end{proposition}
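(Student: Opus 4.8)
The uniqueness is immediate: if $A,A'\in\GL_0(\Ss)$ both restrict to $B$ on $\Delta_0$, then $A_{\mid_{\Delta_0}}=A'_{\mid_{\Delta_0}}$ and Lemma~\ref{unique} gives $A=A'$. The whole content is therefore the \emph{existence} of an extension, and the plan is to write one down explicitly in the diagonalizing coordinates $\pi_j$, in which $\Ss$ is cut out by $S_i=\sum_j\theta_j^i\omega_j\pi_j^2=0$ ($i=0,1,2$) and, by (\ref{piji}), $\varepsilon^{(i)}$ acts as the sign change $\pi_i\mapsto-\pi_i$. First I would reinterpret the datum $B$: since $B$ is a linear automorphism of $\Delta_0\cong\P^1$ carrying $\p_i$ to $\p_{\sigma(i)}$, and $\p_i$ corresponds to the linear polynomial $X-\theta_i$ (i.e. to the ``root'' $\theta_i$ under the parametrization of $\Delta_0$ by $-p_0/p_1$), the map $B$ is the unique M\"obius transformation $g(X)=\frac{aX+b}{cX+d}$ with $g(\theta_i)=\theta_{\sigma(i)}$ for all $i$; as $g$ permutes the finite points $\theta_i$ it has no pole among them and $ad-bc\neq0$.

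Next comes the ansatz for $A$. By Lemma~\ref{lemmapermutation} any extension must satisfy $A\circ\varepsilon^{(i)}=\varepsilon^{(\sigma(i))}\circ A$, and because the $\varepsilon^{(i)}$ are the coordinate sign changes, these relations force $A$ to be monomial with underlying permutation $\sigma$, i.e. $(A\pi)_j=c_j\,\pi_{\sigma^{-1}(j)}$ for nonzero scalars $c_j$. I would then pin down the $c_j$ by the single requirement $A_{\mid_{\Delta_0}}=B$. A point of $\Delta_0$ with root $r$ has $\pi_j=(\theta_j-r)/\omega_j$, and its image under $g$ should have $\pi_j=(\theta_j-g(r))/\omega_j$; writing $\theta_j=g(\theta_{\sigma^{-1}(j)})$ and using the elementary identity $g(\theta_i)-g(r)=\frac{(ad-bc)(\theta_i-r)}{(c\theta_i+d)(cr+d)}$, one sees that matching these for every $r$ is possible exactly when, up to an overall scalar,
$$
c_j=\frac{\omega_{\sigma^{-1}(j)}}{(c\,\theta_{\sigma^{-1}(j)}+d)\,\omega_j}.
$$
This defines a genuine linear automorphism $A$ of $\P^5$ with $A_{\mid_{\Delta_0}}=B$ by construction.

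The main work — and the only real obstacle — is to verify that this $A$ preserves $\Ss$, i.e. that $A^{*}S_i\in\langle S_0,S_1,S_2\rangle$ for $i=0,1,2$. The key input is a transformation law for the $\omega_k$. Because $g$ permutes the roots of $F$, the polynomial $(cX+d)^6F(g(X))$ has the same roots as $F$, so $(cX+d)^6F(g(X))=\lambda F(X)$ for a constant $\lambda$; differentiating and evaluating at $\theta_k$ (using $\omega_k=F'(\theta_k)$ and $g'(\theta_k)=(ad-bc)/(c\theta_k+d)^2$) yields $\omega_{\sigma(k)}=\lambda\,\omega_k/\big((ad-bc)(c\theta_k+d)^4\big)$. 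Substituting this together with $c_{\sigma(k)}=\omega_k/\big((c\theta_k+d)\omega_{\sigma(k)}\big)$ and $\theta_{\sigma(k)}=g(\theta_k)$ into
$$
A^{*}S_i=\sum_k\theta_{\sigma(k)}^i\,\omega_{\sigma(k)}\,c_{\sigma(k)}^2\,\pi_k^2 ,
$$
the coefficient of $\pi_k^2$ collapses, up to the fixed constant $(ad-bc)/\lambda$, to $\omega_k\,(a\theta_k+b)^i(c\theta_k+d)^{2-i}$, which for each $i\in\{0,1,2\}$ is $\omega_k$ times a quadratic in $\theta_k$.

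Hence each $A^{*}S_i$ is a linear combination of $S_0,S_1,S_2$, so $A(\Ss)=\Ss$ and, since $A(\Delta_0)=\Delta_0$, we get $A\in\GL_0(\Ss)$ with $A_{\mid_{\Delta_0}}=B$. Together with the uniqueness from Lemma~\ref{unique}, this proves the proposition. I expect the bookkeeping of the exponents of $(c\theta_k+d)$ in the last step — which is precisely what makes the three cases $i=0,1,2$ all land on quadratics — to be the one place demanding care; everything else is formal.
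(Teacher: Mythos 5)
Your proof is correct and follows essentially the same route as the paper's: both reduce the problem to exhibiting the extension as a monomial transformation in the diagonalizing coordinates $\pi_j$ (with permutation $\sigma$ and multipliers dictated by the M\"obius transformation that $B$ induces on the roots $\theta_i$), and both verify that $\Ss$ is preserved by showing each transformed quadric $\sum_j\theta_j^i\omega_j\pi_j^2$ picks up a coefficient of the form $\omega_k$ times a quadratic in $\theta_k$, the key identity being $\omega_{\sigma(k)}/\omega_k=\mathrm{const}\cdot(c\theta_k+d)^{-4}$. The only (immaterial) difference is that you obtain this quartic transformation law by differentiating $(cX+d)^6F(g(X))=\lambda F(X)$, whereas the paper gets it by manipulating the product $\prod_{i\neq j}(\theta_i-\theta_j)$ directly.
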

\begin{proof}
Suppose $\sigma$ and $B$ given. If $A$ exists, it is unique by
Lemma \ref{unique} and $\sigma$ is the  permutation associated to
$A$ defined by (\ref{permutation}). Let $\widetilde{A}$ the linear
operator of $\Pp_5$ (polynomials of degree $\leq 5$) associated to
$A$. Let $a,b,c,d \in \kbar$ such that
$$
\widetilde{A}(1)=aX+b \quad \text{and} \quad \widetilde{A}(X)=cX+d
\, .
$$
After some linear algebra and using (\ref{eqpermutation}), we find
that the image of a point $\p \in \Ss$ represented by
$$
P(X)= \sum_j \pi_j P_j(X) \, ,
$$
is
\begin{equation}\label{formuleGL0}
\widetilde{A}(P(X)) = \sum_j  \underbrace
                                     {\left( \pi_j
                                     \frac{\omega_j}
                                          {\omega_{\sigma(j)}}
                                     (a \theta_{\sigma(j)}+b) \right)
                                     }_{\pi'_{\sigma(j)}}
                               P_{\sigma(j)}(X) \, .
\end{equation}
We have to prove that the point $(\pi'_1:\ldots:\pi'_6)$ satisfies
the equations (\ref{p20}).

We show that $k_{\sigma(j)} \omega_{\sigma(j)}
{\pi'}^2_{\sigma(j)} = \alpha_j \omega_j \pi_j^2$ for a quadratic
polynomial $\alpha_j$ in $\theta_j$. We have:
$$
k_{\sigma(j)} \omega_{\sigma(j)} {\pi'}^2_{\sigma(j)}
      = k_{\sigma(j)} (a \theta_{\sigma(j)}+b)^2
                     \frac{\omega_j}
                          {\omega_{\sigma(j)}}
                     \omega_j \pi_j^2 \, ,
$$
and then
$$
\alpha_j= k_{\sigma(j)} (a \theta_{\sigma(j)}+b)^2
                     \frac{\omega_j}
                          {\omega_{\sigma(j)}} \, .
$$
One can write $\widetilde{A}(X-\theta_i)$ in two ways, using the
fact that $A(\p_i)=\p_{\sigma(i)}$ or linearity of
$\widetilde{A}$:
$$
\mu_j (X-\theta_{\sigma(j)})= \widetilde{A}(X - \theta_j)= cX+d -
\theta_j (aX+b) \, \quad \text{where } \mu_j \in \kbar \, .
$$
Replacing $X=\theta_{\sigma(j)}$, we obtain the formula
\begin{equation}\label{relationtheta}
\theta_j= \frac{c \theta_{\sigma(j)} + d}
               {a \theta_{\sigma(j)} + b} \, ,
\end{equation}
which gives the  relations between the roots of $F(X)$ necessary
for the existence of the linear automorphism $B$.

Now, we calculate $\omega_j$ replacing each $\theta_j$ by the
formula (\ref{relationtheta}):
$$
\begin{aligned}
\omega_j & = \prod_{i\neq j} (\theta_i - \theta_j )
           = \prod_{i\neq j} \left(
                                   \frac{c \theta_{\sigma(i)} + d}
                                        {a \theta_{\sigma(i)} + b} -
                                   \frac{c \theta_{\sigma(j)} + d}
                                        {a \theta_{\sigma(j)} + b}
                             \right) \\
         & = \frac{1}{(a \theta_{\sigma(j)} + b)^4}
             \underbrace{\frac{1}{\prod_i (a \theta_{\sigma(i)} + b)}}_{\text{constant}}
             \prod_{i\neq j} \left(
                                   (\theta_{\sigma(i)} - \theta_{\sigma(j)})
                                   \underbrace{(bc-ad)}_{\text{constant}}
                             \right).
\end{aligned}
$$
Call $\gamma$ the constant part of the equation:
\begin{equation}\label{partial1}
\frac{\omega_j}{\omega_{\sigma(j)}}= \frac{\gamma}{\left( a
\theta_{\sigma(j)} + b \right)^4} \, .
\end{equation}
Replacing (\ref{partial1}) in $\alpha_j$, we have:
$$
\alpha_j= k_{\sigma(j)} (a \theta_{\sigma(j)}+b)^2
          \frac{\gamma}{\left( a \theta_{\sigma(j)} + b \right)^4} = \gamma
          \frac{k_{\sigma(j)}}{\left( a \theta_{\sigma(j)} + b \right)^2} \, .
$$
To see that $\alpha_j$ is a quadratic polynomial in $\theta_j$
(for each $k_j$), we use the formula (\ref{relationtheta}) to
obtain:
$$
\begin{aligned}
a \theta_j - c  & = \frac{ad - bc}{a \theta_{\sigma(j)}+b}
                                                 \\
                & \text{which gives the result for }
                                                 k_{\sigma(j)}=1 ; \\
a^2 \theta_j^2 - c^2 & =
                    \frac{2ac(ad-bc)\theta_{\sigma(j)} + a^2d^2-b^2c^2}
                    {(a \theta_{\sigma(j)}+b)^2} \\
                    & \text{which gives the result for }
                                                 k_{\sigma(j)}=\theta_{\sigma(j)}; \\
b \theta_j - d  & = \frac{(bc-ad) \theta_{\sigma(j)}}{a
\theta_{\sigma(j)}+b} \\
                &  \text{which gives the result for }
                                                 k_{\sigma(j)}=\theta_{\sigma(j)}^2 \, .
\end{aligned}
$$
\end{proof}

Proposition \ref{prolonge} gives necessary and sufficient
conditions for the existence of non-trivial elements of
$\GL_0(\Ss)$. For the case of non-commuting involutions of $\Ss$,
we can write this conditions easily.
\begin{remark}\label{translation}
For a curve of genus $2$ defined by
$$
Y^2=F(X) =\prod_{i=1}^6 (X-\theta_i) ,
$$
we may suppose (up to a linear translation) that
$$
\theta_3 + \theta_4 = \theta_5 + \theta_6 \quad \text{or} \quad
\theta_3 \theta_4 = \theta_5 \theta_6 .
$$
\end{remark}
Indeed, suppose that $\theta_3 + \theta_4 \neq \theta_5 +
\theta_6$. On putting $\tilde{\theta}_i=\theta_i + t$ with
$$
t=\frac{\theta_5 \theta_6 - \theta_3 \theta_4}{(\theta_3 +
\theta_4) - (\theta_5 + \theta_6)} .
$$
one gets
$$
\tilde{\theta}_3 \tilde{\theta}_4 = \tilde{\theta}_5
\tilde{\theta}_6 \, .
$$
Note that if $\theta_3 + \theta_4 = \theta_5 + \theta_6$ and
$\theta_3 \theta_4 = \theta_5 \theta_6$ then $\left\{ \theta_3 ,
\theta_4 \right\} = \left\{ \theta_5 , \theta_6 \right\}$, which
is impossible.

\begin{corollary}\label{notcomminv}
Let $A$ be a non-commuting involution of $\Ss$ which fixes
$\Delta_0$. Renumbering the roots of $F$, we have
$$
A(\p_j)=\p_{j+1}
$$
for $j=3,5$. By Remark \ref{translation}, we may suppose that
$\theta_3 + \theta_4 = \theta_5 + \theta_6$ or
$\theta_3\theta_4=\theta_5\theta_6$. Then:

If $A(\p_1)=\p_2$, we have
$$
\theta_1 + \theta_2 = \theta_3 + \theta_4 = \theta_5 + \theta_6
\quad \text{or} \quad
\theta_1\theta_2=\theta_3\theta_4=\theta_5\theta_6.
$$

Otherwise, $\p_1$ and $\p_2$ are fixed by $A$ and then
$\theta_1+\theta_2=0$ and
$$
\theta_1^2=\theta_2^2=\theta_3\theta_4=\theta_5\theta_6 .
$$
\end{corollary}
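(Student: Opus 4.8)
The plan is to translate the hypotheses into a statement about a single fractional linear transformation of the roots $\theta_i$ and then read off the possible factorizations of the associated permutation. First I would record that, since $A$ fixes $\Delta_0$, it lies in $\GL_0(\Ss)$ and the involution $\varepsilon$ attached to it by (\ref{permutation}) is the identity; hence $A(\p_i)=\p_{\sigma(i)}$ for a permutation $\sigma\in S_6$. Because $A$ is a \emph{non-commuting} involution it is not in $\Inv(\Ss)$, so by the observation following (\ref{permutation}) we have $\sigma\neq\id$. Applying Lemma \ref{lemmapermutation}, the relation $A\circ\varepsilon^{(i)}=\varepsilon^{(\sigma(i))}\circ A$ used twice together with $A^2=\id$ gives $\varepsilon^{(i)}=\varepsilon^{(\sigma^2(i))}$ for every $i$, and since the $\varepsilon^{(i)}$ are pairwise distinct this forces $\sigma^2=\id$: thus $\sigma$ is a nontrivial product of disjoint transpositions.

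Next I would bring in the arithmetic of the roots through relation (\ref{relationtheta}) of Proposition \ref{prolonge}, which for the extension $A$ of $B=A_{\mid_{\Delta_0}}$ reads $\theta_j=\phi(\theta_{\sigma(j)})$, where $\phi(t)=(ct+d)/(at+b)$ and $a,b,c,d$ are the entries of $\tilde A$ on $\Delta_0$. Equivalently $\phi(\theta_i)=\theta_{\sigma(i)}$ for all $i$. Since $\sigma^2=\id$, the map $\phi^2$ fixes all six distinct roots, so $\phi^2=\id$; and $\phi\neq\id$ because $\sigma\neq\id$. Hence $\phi$ is a genuine involution, i.e. it has trace zero, so $b=-c$. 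A nonidentity fractional linear transformation fixes at most two points of $\P^1(\kbar)$, so $\sigma$ fixes at most two of the indices $1,\dots,6$. Therefore its cycle type is either three transpositions or two transpositions with exactly two fixed indices; a single transposition, which would fix four indices, is excluded. After renumbering I may assume $(34)$ and $(56)$ are among the transpositions, i.e. $A(\p_j)=\p_{j+1}$ for $j=3,5$.

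For each transposed pair $\{i,j\}$ the identity $\phi(\theta_i)=\theta_j$ together with $b=-c$ gives $a\theta_i\theta_j-c(\theta_i+\theta_j)-d=0$, so all transposed pairs lie on one line in the plane of symmetric functions $(s,p)=(\theta_i+\theta_j,\,\theta_i\theta_j)$. I would then invoke Remark \ref{translation} to normalize the quadruple $\theta_3,\theta_4,\theta_5,\theta_6$. If $\theta_3+\theta_4=\theta_5+\theta_6$, two of these points share the same $s$; since the roots are distinct their products cannot coincide, which forces $a=0$, so $\phi(t)=-t-d/c$ and every transposed pair has the same sum $-d/c$. In the three-transposition case this yields $\theta_1+\theta_2=\theta_3+\theta_4=\theta_5+\theta_6$. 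Symmetrically $\theta_3\theta_4=\theta_5\theta_6$ forces $c=0$ and $\phi(t)=d/(at)$, giving equal products $\theta_1\theta_2=\theta_3\theta_4=\theta_5\theta_6$.

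In the remaining case $\p_1,\p_2$ are the two fixed points of $\sigma$, so $\theta_1,\theta_2$ are the finite, distinct fixed points of $\phi$. Here $a=0$ is impossible, since then $\phi$ would fix $\infty$ and have only one finite fixed point; thus the sum normalization cannot occur and Remark \ref{translation} leaves only $\theta_3\theta_4=\theta_5\theta_6$. As above this gives $c=0$ and $\phi(t)=K/t$ with $K=d/a$, whose fixed points are $\pm\sqrt{K}$; hence $\theta_1+\theta_2=0$ and $\theta_1^2=\theta_2^2=K=\theta_3\theta_4=\theta_5\theta_6$, as claimed. The main obstacle I anticipate is the bookkeeping of the two normalizations of Remark \ref{translation}: one must verify that distinctness of the roots really forces the collapse $a=0$ or $c=0$, and that in the two-fixed-point case the translation-invariant inequality $\theta_3+\theta_4\neq\theta_5+\theta_6$ is forced, so that only the product normalization survives.
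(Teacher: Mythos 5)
Your proof is correct, and it follows exactly the route the paper intends: the paper states this corollary without proof as an ``easy'' consequence of Proposition \ref{prolonge} (via relation (\ref{relationtheta})) and Remark \ref{translation}, and your argument --- reducing to an involutive fractional linear transformation $\phi$ with $b=-c$, bounding its fixed points, and using the distinctness of the roots to force $a=0$ or $c=0$ in the two normalizations --- is precisely the omitted verification. The only stylistic remark is that $\sigma^2=\id$ follows even more directly from $A^2=\id$ acting on the distinct points $\p_i$, without invoking Lemma \ref{lemmapermutation}.
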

\section*{Appendix }
{\bf Construction of $\kappa$}

As announced during the proof of Lemma \ref{mapkappa}, we give the
needed polynomials for the construction of $\kappa$. Let $[\XXX] =
[(x,y) + (u,v)]\neq[K_{\Cc}]$, where $yv \neq 0$ and $x \neq u$ be
a divisor class.

There is a unique $M(X)$ of degree $3$ such that
\begin{equation}\label{twiceM}
M(X)^2-F(X)=(X-x)^2(X-u)^2 H(X),
\end{equation}
for a quadratic $H(X)$. There is a unique polynomial $P(X)$ of
degree at most $5$ such that
\begin{equation}\label{polasterisque}
(X-x)(X-u)P(X) \equiv M(X) \text{ mod } F(X).
\end{equation}
Let
\begin{equation}
\begin{array}{lcl}
M(X) & = & \left( m_x (X-x) + 1 \right) \left( (X-u)/(x-u)
\right)^2 y \cr
     &   & + \left( m_u (X-u) + 1 \right) \left( (X-x)/(u-x) \right)^2 v ,
\end{array}
\end{equation}
where~$m_x$ amd~$m_u$ are given by the conditions that the
derivative of~$F(X)-M(X)^2$ vanishes at~$X=x$ and~$X=u$. Hence
\begin{equation}
m_x  =  \displaystyle \frac{F'(x)}{2 F(x)} - \frac{2}{x-u} ,
\qquad m_u  =  \displaystyle \frac{F'(u)}{2 F(u)} - \frac{2}{u-x}
.
\end{equation}
Then consider
\begin{equation}\label{Mstar1}
\begin{array}{rcl}
M^\diamond (X) & = & 2 (x-u)^3 yv M(X) \cr
  & = & (X-u)^2 v \left( \left( F'(x)(x-u)-4F(x) \right) (X-x) + 2 F(x) (x-u) \right) \cr
  & & - (X-x)^2 y \left( \left( F'(u)(u-x)-4F(u) \right) (X-u) + 2 F(u) (u-x) \right) .
\end{array}
\end{equation}
All the terms on the r.h.s are divisible by~$(X-x)(X-u)$,
except~$F(x) (x-u) (X-u)^2$ and~$F(u) (u-x) (X-x)^2$.
But~$F(X)-F(x)=(X-x)F(x,X)$ for some polynomial~$F(x,X)$ and
similarly for~$F(X)-F(u)$. Replacing~$F(x)$ and~$F(u)$
in~(\ref{Mstar1}), one finds that
$$
M^\diamond (X) \equiv (X-x)(X-u) P^\diamond (X) \text{ mod } F(X),
$$
where
$$
\begin{array}{lcl}
P^\diamond (X) &=& \left( F'(x) (x-u) - 4 F(x) - 2 F(x,X)(x-u)
\right) (X-u) v \cr
         & &  - \left( F'(u) (u-x) - 4 F(u) - 2 F(u,X)(u-x) \right) (X-x) y .
\end{array}
$$
The coefficient of~$X^6$ in~$P^\diamond$ is~$-2f_6 (x-u)(y+v)$, so
the polynomial
$$
P^\triangle (X) = P^\diamond (X) +2f_6 (x-u)(y+v) F(X)
$$
is of degree~$5$. Both terms on the r.h.s. are odd and
antisymmetric in~$(x,y)$, $(u,v)$, so multiplying by
$$
\frac{y-v}{(x-u)^2},
$$
we get a polynomial~$P(X)$ whose coefficients are even symmetric
functions of~$(x,y)$, $(u,v)$, so are even functions
of~$k(J(\Cc))$.

We have
$$
\begin{array}{ll}
P^2(X) & \equiv \left( P^\triangle (X)
                 \displaystyle \frac{y-v}{(x-u)^2} \right)^2
                 \equiv \left( P^\diamond (X)
                 \displaystyle \frac{y-v}{(x-u)^2} \right)^2 \\
               & \equiv 4 (x-u)^2 (y-v)^2 y^2 v^2 H(X) \mod F(X) ,
\end{array}
$$
where~$H(X)$ is as in~(\ref{twiceM}). In particular, the
polynomial $P(X)$ satisfying the relation (\ref{polasterisque}) is
the polynomial $P(X)= P^\triangle(X)/\left( 2(x-u)^3 y v \right)$.
One proceeds now as in the proof of Lemma \ref{mapkappa}.

\vskip .2truecm

{\bf Polynomial definition of $\kappa$}

\begin{equation}
\begin{array}{ll}
p_0 = & - f_3 f_6 \xi_1 \xi_3^3+ 1/2  f_5^2 \xi_2 \xi_3^3 -2
\xi_3^3 f_4
f_6 \xi_2 + 2 \xi_3^2 \xi_1^2 f_1 f_6-\xi_3^2 \xi_1^2 f_5 f_2 - \\
 & 2 \xi_3^2 \xi_1 \xi_2 f_6 f_2-1/2 \xi_3^2 \xi_1 f_5 \xi_4-1/2
\xi_3^2 \xi_1 \xi_2 f_5 f_3 -\xi_3^2 \xi_2^2 f_6 f_3-2 \xi_3^2
\xi_2
f_6 \xi_4 \\
  & -1/2 \xi_3 f_3 \xi_4 \xi_1^2 -3/2 \xi_3 \xi_1^2 \xi_2
f_5 f_1-\xi_3 \xi_2 f_4 \xi_4 \xi_1-3 \xi_3 \xi_1 \xi_2^2 f_6 f_1 \\
 & -1/2 \xi_3 \xi_2^2 f_5 \xi_4+f_1 f_2 \xi_1^4+\xi_1^3 \xi_2 f_1
f_3+3/2 \xi_1^3 f_1 \xi_4 +\xi_2^2 f_4 f_1 \xi_1^2+\xi_2^3 f_5 f_1
\xi_1 \\
 & +\xi_2^4 f_1 f_6-1/2 \xi_1 \xi_2 \xi_4^2
\\ \\
\notag
\end{array}
\end{equation}
\begin{equation}
\begin{array}{ll}
p_1  = & 2 \xi_1^4 f_2^2-2 \xi_3 \xi_1 \xi_2^2 f_6 f_2+1/2 \xi_1^2
\xi_2^2 f_5 f_1 -1/2 \xi_1^4 f_3 f_1+2 \xi_2^4 f_2 f_6+3 \xi_1^3
\xi_4 f_2 \\
 & +1/2 \xi_3 f_3^2 \xi_1^3 +1/2 \xi_2^3 f_5 \xi_4+\xi_3
\xi_1^2 \xi_2 f_4 f_3-1/2 \xi_3^2 \xi_2^2 f_5^2 +3/2 \xi_3 \xi_1
\xi_2^2 f_5 f_3 \\
& +2 \xi_3^2 f_4 f_6 \xi_2^2-\xi_3 \xi_1^2 \xi_2 f_5 f_2 +\xi_3
\xi_2^2 f_6 \xi_4+2 \xi_3 \xi_2^3 f_6 f_3+\xi_1^2 \xi_4^2+2
\xi_1^3
\xi_2 f_2 f_3 \\
& -\xi_3 \xi_1^2 \xi_2 f_6 f_1+2 \xi_1^2 f_2 f_4 \xi_2^2+3/2
\xi_1^2 \xi_2 f_3 \xi_4 +\xi_1 f_4 \xi_4 \xi_2^2+\xi_1 \xi_2^3 f_6
f_1 \\
& + 2\xi_1 \xi_2^3 f_5 f_2 +2 \xi_3^2 \xi_1^2 f_6 f_2-1/2 \xi_3^2 \xi_1^2 f_5 f_3+\xi_3^2
\xi_4 f_6 \xi_1
\\ \\
\notag
\end{array}
\end{equation}
\begin{equation}
\begin{array}{ll}
p_2 = & 2 \xi_1^2 \xi_2^2 f_4 f_3-f_6 f_5 \xi_1 \xi_3^3+\xi_3^2
\xi_1^2 f_3 f_6 -\xi_3^2 \xi_1^2 f_4 f_5+\xi_3^2 \xi_1 f_5^2 \xi_2
\\
& +\xi_3 f_1 f_6 \xi_1^3 +\xi_3 \xi_1^3 f_3 f_4-2 \xi_3 f_5 f_2
\xi_1^3+2 \xi_3 \xi_1^2 \xi_2 f_4^2 -2 \xi_3 f_5 \xi_4 \xi_1^2
\\
& -\xi_1^4 f_1 f_4+2 \xi_1^4 f_3 f_2 -2 \xi_3 \xi_2 f_6 f_2
\xi_1^2-5 \xi_3 \xi_2^2 f_6 f_3 \xi_1 +2 \xi_3 \xi_1 \xi_2^2 f_5
f_4
\\
& -3 \xi_3 f_6 \xi_4 \xi_2 \xi_1 -3 \xi_3 \xi_2 f_5 f_3 \xi_1^2+2
\xi_2^4 f_3 f_6+\xi_2^3 f_6 \xi_4 +2 f_3 \xi_4 \xi_1^3+2 \xi_2
f_3^2
\xi_1^3 \\
& +2 \xi_2^3 f_5 f_3 \xi_1 -\xi_2 f_5 f_1 \xi_1^3-\xi_2^2 f_6 f_1
\xi_1^2+\xi_2^2 f_5 \xi_4 \xi_1 +\xi_2 f_4 \xi_4 \xi_1^2+2 \xi_3
\xi_2^3 f_6 f_4 \\
& +\xi_3^2 \xi_2^2 f_6 f_5 -4 \xi_3^2 \xi_1 f_4 f_6 \xi_2
\\ \\
\notag
\end{array}
\end{equation}
\begin{equation}
\begin{array}{ll}
p_3 = & -2 f_6^2 \xi_1 \xi_3^3-\xi_3^2 \xi_1^2 f_5^2+2 \xi_3^2
\xi_1^2 f_4 f_6 -\xi_3^2 \xi_2 f_6 f_5 \xi_1+2 \xi_3^2 f_6^2
\xi_2^2
\\
& +\xi_3 \xi_1^3 f_5 f_3 -2 \xi_3 \xi_1^3 f_2 f_6-\xi_3 \xi_1^2
\xi_2 f_6 f_3-2 \xi_3 \xi_1^2 f_6 \xi_4 +2 \xi_3 \xi_1 \xi_2^2
f_5^2
\\
& -4 \xi_3 \xi_1 f_4 f_6 \xi_2^2+2 \xi_3 f_6 f_5 \xi_2^3 -\xi_1^4
f_1 f_5+2 \xi_1^4 f_4 f_2+2 \xi_1^3 f_4 \xi_4+2 \xi_1^3 \xi_2 f_4
f_3 \\
& -\xi_1^3 \xi_2 f_6 f_1+\xi_1^2 \xi_2 f_5 \xi_4+2 \xi_1^2 f_4^2
\xi_2^2+\xi_1 \xi_2^2 f_6 \xi_4 +2 \xi_1 \xi_2^3 f_5 f_4+2 \xi_2^4
f_6 f_4
\\ \\
\notag
\end{array}
\end{equation}
\begin{equation}
\begin{array}{ll}
p_4 = & \xi_3^2 \xi_1^2 f_6 f_5-2 \xi_3^2 \xi_2 f_6^2 \xi_1+\xi_3
f_3 f_6 \xi_1^3 -2 \xi_3 \xi_2 f_5^2 \xi_1^2+2 \xi_3 f_4 f_6 \xi_2
\xi_1^2 \\
& -2 \xi_3 \xi_2^2 f_6 f_5 \xi_1 +2 \xi_3 f_6^2 \xi_2^3-f_6 f_1
\xi_1^4+2 f_5 f_2 \xi_1^4+2 f_5 \xi_4 \xi_1^3 +2 \xi_2 f_5 f_3
\xi_1^3\\
& +2 \xi_2^2 f_5 f_4 \xi_1^2+\xi_2 f_6 \xi_4 \xi_1^2 +2 \xi_2^3
f_5^2 \xi_1+2 \xi_2^4 f_6 f_5
\\ \\
\notag
\end{array}
\end{equation}
\begin{equation}
\begin{array}{ll}
p_5 = & 2 (f_6 \xi_1^2 \xi_3^2-\xi_3 f_5 \xi_2 \xi_1^2 -2 \xi_3
f_6 \xi_2^2 \xi_1+f_2 \xi_1^4+\xi_1^3 \xi_4 +\xi_1^3 f_3 \xi_2+f_4
\xi_2^2 \xi_1^2 \\
& +\xi_2^3 f_5 \xi_1 +f_6 \xi_2^4) f_6 \notag
\end{array}
\end{equation}


\begin{thebibliography}{Cas}


\bibitem[Be]{Be} {\sc A. Beauville},
{\it Surfaces alg\'ebriques complexes}. Soc. Math. France, Paris,
1978.
\bibitem[C]{C} {\sc P. Corn} Tate-Shafarevich Groups and
K3-Surfaces,  to appear in {\it Mathematics of Computation}.

\bibitem[Cas]{Cas} {\sc J.\,W.\,S. Cassels},
The Mordell-Weil group and curves of genus~$2$. {\it Arithmetic
and geometry. Papers dedicated to I.R. Shafarevich, 29--60, Vol
I}, Arithmetic. Birkh\"auser, Boston, Mass, 1983.

\bibitem[CF]{CF} {\sc J.\,W.\,S. Cassels \& E.\,V. Flynn},
{\it Prolegomena to a Middlebrow Arithmetic of Curves of
Genus~$2$}. London Math. Soc. Lecture Note Series 230, Cambridge,
1996.

\bibitem[Fl]{Fl} {\sc E.\,V. Flynn}, The jacobian and formal group
of a curve of genus $2$ over an arbitrary ground field. {\it Mth.
Proc. Cambridge Philos. Soc. 107} (1990), 425-441.

\bibitem[GH]{GH} {\sc Griffiths \& Harris},
{\it Principlies of Algebraic Geometry} John Wiley and Sons,
New-York, 1978.


\bibitem[Hu]{Hu}{\sc R.W.H.T. Hudson}, {\it Kummer's Quartic
Surface}. Cambridge University Press, 1990.

\bibitem[LL]{LL} {\sc A. Logan and R. van Luijk} Nontrivial
elements of Sha explained through K3 surfaces, {\it Mth. Comp. 78}
(2009), 441-483.

\bibitem[PS]{PS} {\sc B. Poonen \& E.\,F. Schaefer},
Explicit descent for Jacobians of cyclic covers of the projective
line, {\it J. reine angew. Math. 488} (1997), 141--188.








\bibitem[Sto]{St3} {\sc M. Stoll},
Implementing 2-descent for Jacobians of hyperelliptic curves. {\it
Acta Arith. 98} (2001), 245--277.






















\end{thebibliography}
\end{document}